\newtheorem{theo}{Theorem}[section]
\newtheorem{lemmm}{Lemma}[section]
\newtheorem{lem}{Lemma}[section]
\newtheorem{prop}{Proposition}
\newtheorem{propp}{Proposition}[section]
\theoremstyle{definition}
\newtheorem{defini}{Definition}[section]
\newtheorem{example}{Example}
\newtheorem{coro}{Corollary}
\theoremstyle{remark}
\numberwithin{equation}{section}
\def \<{\langle}
\def \>{\rangle}
\def \be{\begin{equation}\label}
\def \ee{\end{equation}}
\def \bex{\begin{example}\label}
\def \eex{\end{example}}
\def \bl{\begin{lem}\label}
\def \el{\end{lem}}
\def \bt{\begin{thm}\label}
\def \et{\end{thm}}
\def \bp{\begin{prop}\label}
\def \ep{\end{prop}}
\def \br{\begin{rem}\label}
\def \er{\end{rem}}
\def \bc{\begin{coro}\label}
\def \ec{\end{coro}}
\def \bd{\begin{de}\label}
\def \ed{\end{de}}
\begin{document}
\title[Quantum Supergroup, Scasimir Operators, and Dickson polynomials]
{Quantum Supergroup ${U_{r,s}}\left( {osp(1,2)} \right)$, Scasimir Operators, and Dickson polynomials}

\author[Liu]{Fu Liu}
\address{Department of Mathematics, Shanghai University,
Shanghai 200444, China} \email{1279897762@shu.edu.cn}

\author[Hu]{Naihong Hu}
\address{School of Mathematical Sciences, Shanghai Key Laboratory of PMMP, East China Normal University
Shanghai 200241, China} \email{nhhu@math.ecnu.edu.cn}

\author[Jing]{Naihuan Jing$^\star$}
\address{
Department of Mathematics, North Carolina State University,
   Raleigh, NC 27695, USA \\}
\email{jing@ncsu.edu}
\thanks{$*$ Corresponding author}

\keywords{Two-parameter quantum supergroup, center, Dickson polynomial, Scasimir Operator.}
\begin{abstract}
We  study the center of the two-parameter quantum supergroup ${U_{r,s}}\left( {osp(1,2)} \right)$ using the Dickson polynomial. We show that the Scasimir operator is completely determined by the $q$-deformed Chebychev polynomial, generalizing an earlier work of Arnaudon and Bauer.
\end{abstract}

\maketitle

\section{\textbf{Introduction}}

  Using down-up algebras, Benkart and Withspoon \cite{BW1, BW2} recovered Takeuchi's definition of the two-parameter quantum group of type $A_n$. Since then, a systematic
 study of two-parameter quantum groups has been carried out, for instance \cite{BW3, JL1, JL2, BW4} for type $A_n$. \cite{BGH1, BGH2, HSY} for types $B_n$, $C_n$, $D_n$; \cite{BH, CH, HSQ, HW} for types ${F_4}$, ${E_6}$, ${E_7}$, ${E_8}$, ${G_2}$; \cite{HP1, HP2} for any types, respectively.
Some work has also been done on the structure and representation theory of the two-parameter quantum supergroups \cite{AY, Sh, Zh, CL, HM}.

For $U_q(\mathfrak g)$ with $q$ generic, the center of the quantized enveloping algebra of a semisimple Lie algebra is well-known to be described by the Harish-Chandra homomorphism, that is, $Z\left( {{U_q}\left( \mathfrak{g} \right)} \right) \cong {\left( {U_{ev}^0} \right)^W}$, where ${\left( {U_{ev}^0} \right)^W}$ denotes the span of all quantum Cartan elements ${k_\mu }$ with $\mu$ ``even" weights, i.e., in $2\Lambda$. This naturally produces the quantum Casimir operator analogous to the semisimple case, and here we still call it the Casimir operator, which can be extended to the two-parameter quantum case. As for a more recent work of ${\left( {U_{ev}^0} \right)^W}$ described as a polynomial algebra in several variables or a certain quotient object of it, see \cite{LXZ1, LXZ2}. Recently, the Harish-Chandra type theorem for $U_q(\mathfrak g)$ with $\mathfrak g$ being a simple basic Lie superalgebra (cf. \cite{Kac}) when $q$ is generic has been obtained by Luo-Wang-Ye in \cite{LWY}.

 However, it is known that in the case when $q$ is a root of unity, the Casimir operators in general do not exhaust the whole center of the quantum (super)group. For instance, some powers of generators also belong to the center. All these central elements are however not independent, but satisfy certain polynomial relations \cite{Lu, AB1, AAB, ABF, ACF, AB2}.
 On the other hand, such relations in the center of a quantum supergroup have not been known in general, except for some special examples \cite{AAB,ABF, ACF}, \cite{Ke, AB2}. In \cite{AB2}, the authors used the Chebychev polynomial to give explicit relations between the Casimir operator, Scasimir operator and the powers of generators in the center.
 Corresponding result for the quantum supergroup ${U_{r,s}}\left( {osp(1,2)} \right)$ was discussed in  \cite{Sh}, but the relations between the Casimir operator, the Scasimir operator in terms of generators were absent for root of unity case.
 The aim of this paper is to derive these relations using the Dickson polynomial, which is a certain deformation of the Chebychev polynomial (cf. \cite{LMT}).

 Denote the two-parameter quantum supergroup ${U_{r,s}}\left( {osp(1,2)} \right)$ by $U$. The article is divided into four parts as follows.
 In Sect. 2, we recall the definition of $U$ and describe the Poincar\'e-Birkhoff-Witt (PBW) basis of $U$ as well as the relations among generators. The non-zero-divisor property of $U$ when $r{s^{-1}}$ is a root of unity, and the characterization of the center of $U$ are considered in Sect. 3. Subsequently in Sect. 4, we recall the Dickson polynomial and its generating function. Finally in Sect. 5, the relation formulas between the Casimir operator, the Scasimir operator and the powers of generators in the center are determined by using the Dickson polynomial.

\section{\textbf{Quantum  supergroup ${U_{r,s}}\left( {osp(1,2)} \right)$}}

Let $r,s$ be generic with ${r^2} \ne {s^2}$. We denote by ${\mathscr A} = \mathbb{C}\left[ {{{\sqrt r }^{ \pm 1}},{{\sqrt s }^{ \pm 1}}} \right]$, and by ${\mathscr F} = \mathbb{C}\left( {\sqrt r ,\sqrt s } \right)$ the quotient field of ${\mathscr A}$. Setting $q = {r^{\frac{1}{2}}}{s^{\frac{1}{2}}}$, $q' =  - q$, $\eta  = ({r^{\frac{1}{2}}} + {s^{\frac{1}{2}}})(r - s)$. Also let $z$ be an indeterminate $z$ $ \in \mathbb{C}$.

In the following, we  often study the case when $q$ is a primitive ${\l ^{th}}$-root of unity.  We use ${\l'}$ to denote the order of $q'$ and $L$ to denote the smallest even multiple of ${\l '}$, i.e., $\l$ if ${\l}$ is even and $2\l$ if $\l$ is odd.

The two-parameter quantum supergroup  ${U_{r,s}}\left( {osp(1,2)} \right)$ is the unital associative algebra over ${\mathscr F}$ generated by $e,f,{w^{ \pm 1}},{w^{' \pm 1}}$ and subject to the following relations \cite{Sh}:
\begin{eqnarray}\label{n:tor1}
&&\omega\omega^{'}=\omega^{'}\omega, \quad \omega\omega^{-1}=\omega^{-1}\omega=1=\omega^{'}\omega^{'-1}=\omega^{'-1}\omega^{'},\\ \label{n:tor2}
&&\omega e\omega^{-1}=q e, \quad \omega^{'} e\omega^{'-1}=q^{-1}e,\\\label{n:tor3}
&&\omega f\omega^{-1}=q^{-1}f, \quad\omega^{'} f\omega^{'-1}=q f,\\\label{n:tor4}
&&ef + fe = \frac{{\omega  - {\omega ^{'}}}}{{r - s}}.\label{n:tor5}
\end{eqnarray}

 The ${\mathbb{Z}_2}$-grading $\deg (e) = \deg (f) = 1$, $\deg (\omega ) = \deg ({\omega ^{'}}) = 0$ is compatible with the above relations, hence can be extended to a grading of $U$. Then we have the ${\mathbb{Z}_2}$-graded  decomposition: $U = {U_0} \oplus {U_1}$. The elements in the first (resp. second) summand are called bosonic elements (resp. fermionic elements). Clearly ${U_0}$ is a subalgebra of $U$.

We are interested in the center. The {\it Scasimir operator} $\widetilde{c}$ is defined by
\begin{equation*}
\widetilde{c}={r^{{\textstyle{1 \over 2}}}}\omega  - {s^{{\textstyle{1 \over 2}}}}{\omega ^{'}} - \eta fe.
\end{equation*}
For convenience, we change variables and
set $b = {r^{{\textstyle{1 \over 4}}}}{\omega ^{{\textstyle{1 \over 2}}}},v = {s^{ - {\textstyle{1 \over 4}}}}{\omega ^{' - {\textstyle{1 \over 2}}}}$ and  ${b^{ - 1}}v\widetilde{c} = u - \frac{1}{u}$.
The {\it Casimir operator} $C$ is then given by
\begin{equation*}
C = {b^{ - 2}}{v^2}{\widetilde{c}}^{2} = {(rs)^{ - {\textstyle{1 \over 2}}}}{(\omega {\omega ^{'}})^{ - 1}}{\widetilde{c}}^{2}.
\end{equation*}
The following result is well-known:
\begin{propp} \cite{Le}
The Scasimir operator $\widetilde{c}$ anti-commutes with fermionic elements and commutes with bosonic elements.
\end{propp}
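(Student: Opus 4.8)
The plan is to reduce the statement to a check on the four generators and then propagate it multiplicatively. First I would record that $\widetilde{c}$ is itself bosonic: $\omega,\omega'$ have degree $0$ and $fe$ is a product of two fermionic generators, so $\widetilde{c}\in U_0$. Let $\sigma\colon U\to U$ be the parity automorphism, i.e.\ the algebra automorphism acting as $+1$ on $U_0$ and $-1$ on $U_1$. The assertion that $\widetilde{c}$ commutes with bosonic elements and anticommutes with fermionic ones is exactly the single identity $\widetilde{c}\,x=\sigma(x)\,\widetilde{c}$ for all homogeneous $x\in U$.

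The key observation is that $S=\{\,x\in U:\widetilde{c}\,x=\sigma(x)\,\widetilde{c}\,\}$ is a subalgebra: it contains $1$, is a linear subspace since $\sigma$ is linear, and is closed under multiplication because $\widetilde{c}\,xy=\sigma(x)\,\widetilde{c}\,y=\sigma(x)\sigma(y)\,\widetilde{c}=\sigma(xy)\,\widetilde{c}$, using that $\sigma$ is an algebra homomorphism. Hence it suffices to verify $\widetilde{c}\,x=\sigma(x)\widetilde{c}$ on the generators $\omega^{\pm1},\omega'^{\pm1},e,f$, the cases of $\omega^{-1},\omega'^{-1}$ following formally from those of $\omega,\omega'$.

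For the bosonic generators the verification is immediate. Since $\sigma(\omega)=\omega$, I must show $[\widetilde{c},\omega]=0$, and likewise for $\omega'$. The elements $\omega,\omega'$ commute with each other and with $r^{1/2}\omega-s^{1/2}\omega'$, so the only point is that they commute with $fe$; this follows from $\omega(fe)\omega^{-1}=(\omega f\omega^{-1})(\omega e\omega^{-1})=(q^{-1}f)(qe)=fe$ and the parallel computation for $\omega'$, using \eqref{n:tor2}--\eqref{n:tor3}.

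The substantive part, and the step I expect to be the main obstacle, is the fermionic case. Here $\sigma(e)=-e$, so I must show $\{\widetilde{c},e\}=0$, and similarly $\{\widetilde{c},f\}=0$. I would expand $\widetilde{c}\,e+e\,\widetilde{c}$, move every $\omega,\omega'$ to the right of $e$ using $\omega e=qe\omega$ and $\omega'e=q^{-1}e\omega'$, and replace the cross term $-\eta(fe+ef)e$ by $-\eta\frac{\omega-\omega'}{r-s}e$ via \eqref{n:tor4}. Collecting terms, the anticommutator becomes a combination of $e\omega$ and $e\omega'$ whose coefficients, after inserting $\frac{\eta}{r-s}=r^{1/2}+s^{1/2}$ from the definition of $\eta$, reduce to $r^{1/2}-s^{1/2}q$ and $r^{1/2}q^{-1}-s^{1/2}$. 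The whole point of the normalization of $\eta$ together with the value of $q$ (so that $s^{1/2}q=r^{1/2}$ and $r^{1/2}q^{-1}=s^{1/2}$) is that both coefficients vanish, giving $\{\widetilde{c},e\}=0$; the computation for $f$ is entirely parallel, now applying \eqref{n:tor4} on the left in the form $f(ef+fe)$. The delicate point is purely the bookkeeping: keeping the reordering relations and the two deformation parameters straight so that this cancellation is transparent and so that the specific role of $\eta$ is visible.
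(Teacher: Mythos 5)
Your argument is correct and complete, but note that the paper itself offers no proof of this proposition --- it is stated as well-known with a citation to Le\'sniewski [Le] --- so there is no internal proof to compare against; your direct verification is exactly the standard one. The reduction via the set $S=\{x : \widetilde{c}\,x=\sigma(x)\,\widetilde{c}\}$ being a subalgebra is sound, the bosonic check $\omega(fe)\omega^{-1}=(q^{-1}f)(qe)=fe$ is right, and your fermionic computation is correct: the anticommutator $\widetilde{c}e+e\widetilde{c}$ collapses to $\bigl(r^{1/2}(q+1)-(r^{1/2}+s^{1/2})q\bigr)e\omega+\bigl(-s^{1/2}(q^{-1}+1)+(r^{1/2}+s^{1/2})q^{-1}\bigr)e\omega'=(r^{1/2}-s^{1/2}q)e\omega+(r^{1/2}q^{-1}-s^{1/2})e\omega'$, as you say. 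The one point worth flagging is that your cancellation uses $s^{1/2}q=r^{1/2}$, i.e.\ $q=r^{1/2}s^{-1/2}$, whereas Section~2 of the paper literally sets $q=r^{1/2}s^{1/2}$; with that literal value the coefficient $r^{1/2}-s^{1/2}q=r^{1/2}(1-s)$ does \emph{not} vanish and the proposition would be false. Since everything else in the paper (the quantities $Q_{\pm}(m)$ built from $r^{\pm 1/2}s^{\mp 1/2}$, and the repeated identification of ``$q$ a root of unity'' with ``$rs^{-1}$ a root of unity'') presupposes $q=r^{1/2}s^{-1/2}$, you have implicitly corrected a typo; it would strengthen your write-up to say so explicitly rather than leave the reader to reconcile your identity $s^{1/2}q=r^{1/2}$ with the stated definition of $q$.
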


The following result will be useful.
\begin{lem}\label{l4.2}
For any $a,b \in \mathbb{N}$, the following equations hold in $U$:
\begin{eqnarray}\label{10}
 && {f^{m}}e + {( - 1)^{m - 1}}e{f^{m}} = {f^{m - 1}}\left( {\frac{{{q^{' - m}} - 1}}{{{q^{' - 1}} - 1}}\omega  - \frac{{{q^{'m}} - 1}}{{{q^{'}} - 1}}{\omega ^{'}}} \right){(r - s)^{ - 1}}, \\
\label{11}
 &&  {e^m}f + {( - 1)^{m - 1}}f{e^m} = {e^{m - 1}}\left( {\frac{{{q^{'m}} - 1}}{{{q^{'}} - 1}}\omega  - \frac{{{q^{' - m}} - 1}}{{{q^{' - 1}} - 1}}{\omega ^{'}}} \right){(r - s)^{ - 1}}.
\end{eqnarray}
\end{lem}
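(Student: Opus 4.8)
The plan is to establish \eqref{10} by induction on the exponent $m$ and then to deduce \eqref{11} from \eqref{10} by exploiting a symmetry of the defining relations, so that no second induction is needed. For the base case $m=1$ the left-hand side of \eqref{10} is $fe+ef$, which by \eqref{n:tor5} equals $(\omega-\omega^{'})(r-s)^{-1}$; this agrees with the right-hand side because both scalar coefficients $\frac{q^{'-1}-1}{q^{'-1}-1}$ and $\frac{q^{'}-1}{q^{'}-1}$ reduce to $1$.

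For the inductive step I assume \eqref{10} for a given $m$ and multiply the identity on the left by $f$. The only term not yet in the desired shape is $(-1)^{m-1}fef^{m}$, and the key move is to apply \eqref{n:tor5} in the form $fe=(\omega-\omega^{'})(r-s)^{-1}-ef$. This turns $(-1)^{m-1}fef^{m}$ into $(-1)^{m}ef^{m+1}$, which carries exactly the sign required for the $(m+1)$-st instance, plus a residual term $(-1)^{m-1}(\omega-\omega^{'})(r-s)^{-1}f^{m}$. To return everything to the common left factor $f^{m}$, I push the Cartan elements through the power of $f$ using \eqref{n:tor3}, namely $\omega f^{m}=q^{-m}f^{m}\omega$ and $\omega^{'}f^{m}=q^{m}f^{m}\omega^{'}$.

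The one place that requires genuine care is the coefficient bookkeeping after these substitutions; this is the main obstacle, although it is purely computational. Collecting the $\omega$- and $\omega^{'}$-terms over the factor $f^{m}(r-s)^{-1}$, the step reduces to the two scalar identities $\frac{q^{'-m}-1}{q^{'-1}-1}-(-1)^{m-1}q^{-m}=\frac{q^{'-(m+1)}-1}{q^{'-1}-1}$ and $\frac{q^{'m}-1}{q^{'}-1}-(-1)^{m-1}q^{m}=\frac{q^{'m+1}-1}{q^{'}-1}$. Since $q'=-q$, one has $(-1)^{m-1}q^{-m}=-q^{'-m}$ and $(-1)^{m-1}q^{m}=-q^{'m}$, so both identities become telescoping relations $\frac{q^{'-m}-1}{q^{'-1}-1}+q^{'-m}=\frac{q^{'-m-1}-1}{q^{'-1}-1}$ and $\frac{q^{'m}-1}{q^{'}-1}+q^{'m}=\frac{q^{'m+1}-1}{q^{'}-1}$, each verified in one line by clearing denominators. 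This closes the induction and proves \eqref{10}.

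Finally, \eqref{11} follows from \eqref{10} by symmetry. The relations \eqref{n:tor1}--\eqref{n:tor5} are preserved by the automorphism $\sigma$ that interchanges $e\leftrightarrow f$ and $\omega\leftrightarrow\omega^{'}$ while swapping $\sqrt{r}\leftrightarrow\sqrt{s}$ in the scalars; note that $q=r^{1/2}s^{1/2}$, hence $q'=-q$, is fixed by this swap. Being an algebra homomorphism, $\sigma$ preserves the order of factors, so it sends $f^{m}e\mapsto e^{m}f$ and $ef^{m}\mapsto fe^{m}$; on the right-hand side it exchanges $\omega\leftrightarrow\omega^{'}$, fixes the $q'$-coefficients, and replaces $(r-s)^{-1}$ by $(s-r)^{-1}=-(r-s)^{-1}$. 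Applying $\sigma$ to \eqref{10} and absorbing this sign yields precisely \eqref{11}.
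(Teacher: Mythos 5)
Your proof is correct. The paper itself gives no argument here beyond the one-line remark that the identities ``can be directly verified by the defining relations (2.1)--(2.4)'', and your induction on $m$ (with the telescoping identities $\frac{q'^{-m}-1}{q'^{-1}-1}+q'^{-m}=\frac{q'^{-m-1}-1}{q'^{-1}-1}$ and its counterpart, which I checked) is exactly the verification being alluded to. Your derivation of the second identity from the first via the automorphism $e\leftrightarrow f$, $\omega\leftrightarrow\omega'$, $\sqrt{r}\leftrightarrow\sqrt{s}$ is a clean way to avoid a second induction and is consistent with the defining relations, so nothing further is needed.
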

\begin{proof}
These can be directly verified by the defining relations $(2.1)$---$(2.4)$.
\end{proof}

Note that $\widetilde{c}$ is a square root of $C$ up to a constant. In fact, there is a finer relation among $\widetilde{c}$,
$\omega ,{\omega ^{'}},f,e$.
\begin{propp} For a positive integer $m$, let
\begin{gather*}
\varepsilon (m)=\begin{cases} 1, & m = 0 \ or \ 1 \;(\bmod\; 4);\\ -1, & m = 2 \ or \ 3 \;(\bmod\; 4). \end{cases}
\end{gather*}
Then
\begin{eqnarray*}
\mathop \Pi \limits_{n = 0}^{m - 1} \left( {\widetilde{c} - {q^{'n}}{r^{{\textstyle{1 \over 2}}}}\omega  + {q^{' - n}}{s^{{\textstyle{1 \over 2}}}}{\omega ^{'}}} \right) = \varepsilon (m)( - \eta ){}^{m}{f^{m}}{e^{m}}.
\end{eqnarray*}
\end{propp}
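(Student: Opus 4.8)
The plan is to argue by induction on $m$. The base case $m=1$ is immediate: the single factor with $n=0$ is $\widetilde{c}-r^{1/2}\omega+s^{1/2}\omega'=-\eta fe$ by the very definition of $\widetilde{c}$, which agrees with $\varepsilon(1)(-\eta)^1fe$. The structural observation that makes the induction run is that $\widetilde{c}$, $\omega$ and $\omega'$ commute pairwise: $\omega$ and $\omega'$ commute by \eqref{n:tor1}, and $\widetilde{c}$ commutes with both because $\omega,\omega'$ are bosonic and $\widetilde{c}$ commutes with every bosonic element by the cited result of Le. Hence each factor $\widetilde{c}-q'^{n}r^{1/2}\omega+q'^{-n}s^{1/2}\omega'$ lies in the commutative subalgebra generated by $\widetilde{c},\omega,\omega'$, the product is independent of the order, and I may split off the last factor, writing $P_{m+1}=P_m\cdot L_m$ where $P_m$ denotes the product up to $n=m-1$ and $L_m=\widetilde{c}-q'^{m}r^{1/2}\omega+q'^{-m}s^{1/2}\omega'$.

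For the inductive step I first substitute $\widetilde{c}=r^{1/2}\omega-s^{1/2}\omega'-\eta fe$ to put the new factor in normal form,
\begin{equation*}
L_m=r^{1/2}(1-q'^{m})\,\omega-s^{1/2}(1-q'^{-m})\,\omega'-\eta fe,
\end{equation*}
and then apply the induction hypothesis $P_m=\varepsilon(m)(-\eta)^{m}f^{m}e^{m}$. Since $e$ has $\omega$-weight $q$ and $f$ has $\omega$-weight $q^{-1}$ by \eqref{n:tor2}--\eqref{n:tor3} (and symmetrically for $\omega'$), the element $f^{m}e^{m}$ has weight zero and therefore commutes with $\omega$ and $\omega'$. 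Consequently
\begin{equation*}
P_{m+1}=\varepsilon(m)(-\eta)^{m}\Big(r^{1/2}(1-q'^{m})\,\omega f^{m}e^{m}-s^{1/2}(1-q'^{-m})\,\omega' f^{m}e^{m}-\eta\,f^{m}e^{m}fe\Big).
\end{equation*}

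The heart of the argument is to straighten the term $f^{m}e^{m}fe$. Using the second identity of Lemma~\ref{l4.2} to move one $f$ through $e^{m}$, namely $e^{m}f=(-1)^{m}fe^{m}+e^{m-1}B_m$ with $B_m$ a fixed linear combination of $\omega,\omega'$, and then commuting the resulting Cartan element back through the remaining $e$ (which only rescales $\omega,\omega'$ by powers of $q^{\pm1}$), I obtain
\begin{equation*}
f^{m}e^{m}fe=(-1)^{m}f^{m+1}e^{m+1}+\big(\alpha_m\,\omega+\beta_m\,\omega'\big)f^{m}e^{m}
\end{equation*}
for explicit scalars $\alpha_m,\beta_m$ built from the $q'$-integers $\tfrac{q'^{\pm m}-1}{q'^{\pm1}-1}$ occurring in $B_m$. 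Substituting this back, the leading term contributes $\varepsilon(m)(-\eta)^{m}(-\eta)(-1)^{m}f^{m+1}e^{m+1}$, and since $\varepsilon(m+1)=(-1)^{m}\varepsilon(m)$ this is precisely $\varepsilon(m+1)(-\eta)^{m+1}f^{m+1}e^{m+1}$, the desired right-hand side.

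Everything therefore reduces to showing that the two leftover Cartan contributions cancel, that is, that the total coefficient of $\omega f^{m}e^{m}$, namely $r^{1/2}(1-q'^{m})-\eta\alpha_m$, and the total coefficient of $\omega' f^{m}e^{m}$, namely $-s^{1/2}(1-q'^{-m})-\eta\beta_m$, both vanish. This is the main obstacle and the only genuinely computational point: after clearing the $q'$-integer denominators it reduces to a single elementary identity relating $q,q',r^{1/2},s^{1/2}$ and $\eta=(r^{1/2}+s^{1/2})(r-s)$, to be checked uniformly in $m$ using $q'=-q$. I expect this cancellation---tracking the $q'$-integer coefficients and confirming they collapse for every $m$---to be the delicate step, whereas the commutation of the factors and the sign bookkeeping through $\varepsilon$ are routine.
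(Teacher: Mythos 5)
Your argument is correct, but it takes a genuinely different route from the paper's. The paper also inducts on $m$, but rather than peeling off the last factor and normal-ordering, it sandwiches the induction hypothesis as $f\bigl(\prod_{n=0}^{t-1}(\widetilde{c}-{q'}^{n}r^{1/2}\omega+{q'}^{-n}s^{1/2}\omega')\bigr)e$: since $f\widetilde{c}=-\widetilde{c}f$, $f\omega=-q'\omega f$ and $f\omega'=-{q'}^{-1}\omega'f$, pushing $f$ through the product turns each factor into $(-1)$ times the same factor with $n$ replaced by $n+1$, and the trailing $-\eta fe$ is precisely the $n=0$ factor; the sign identity $(-1)^{t}\varepsilon(t)\varepsilon(t+1)=1$ then closes the induction. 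That version uses only the defining relations together with the anticommutation of $\widetilde{c}$ with $f$, and no Cartan terms ever appear, so nothing has to cancel. Your version instead relies on Lemma~\ref{l4.2} and must verify the cancellation you flag but do not carry out at the end; it does hold, and is short once written down: with $q=r^{1/2}s^{-1/2}$ (the reading consistent with everything else in the paper --- with the printed $q=r^{1/2}s^{1/2}$ even the anticommutation of $\widetilde{c}$ with $e$ fails) one gets $\eta\alpha_m=(r^{1/2}+s^{1/2})\,q\,\frac{{q'}^{m}-1}{q'-1}$, and $(r^{1/2}+s^{1/2})q=r^{1/2}(1+q)=-r^{1/2}(q'-1)$, whence $\eta\alpha_m=r^{1/2}(1-{q'}^{m})$; symmetrically $\eta\beta_m=-s^{1/2}(1-{q'}^{-m})$, so both Cartan coefficients vanish for every $m$. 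In summary both proofs work: yours is more computational and makes explicit, via Lemma~\ref{l4.2}, why $-\eta fe$ regenerates the next linear factor, while the paper's exploits the Scasimir's anticommutation to shift the index $n$ wholesale and thereby avoids all coefficient tracking.
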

\begin{proof}
When $m=1$, the equation is the definition of $\widetilde{c}$.
Suppose for $m=t\; (t \ge 1)$, this equation holds. Then for $m=t+1$,
\begin{equation*}
\begin{split}
&f\mathop \Pi\limits_{n = 0}^{t - 1} \left( {\widetilde{c} - {q^{'n}}{r^{{\textstyle{1 \over 2}}}}\omega  + {q^{' - n}}{s^{{\textstyle{1 \over 2}}}}{\omega ^{'}}} \right)e( - \eta )\varepsilon (t)\varepsilon (t + 1)\\
&\qquad\qquad= - \eta {\varepsilon ^2}(t)\varepsilon (t + 1)( - \eta ){}^t{f^{t + 1}}{e^{t + 1}} \\
&\qquad\qquad=\varepsilon (t + 1)( - \eta ){}^{t + 1}{f^{t + 1}}{e^{t + 1}}.
\end{split}
\end{equation*}
Apply the commutation relations of $f$ with $\omega, \omega', \widetilde{c}$:  
\begin{gather*}
f\omega  =  - {q^{'}}\omega f, \quad f{\omega ^{'}} =  - {q^{' - 1}}{\omega ^{'}}f,\quad f\widetilde{c} =  - \widetilde{c}f
\end{gather*}
to the left side of the equation, then
\begin{gather*}
LHS  =  ( - 1){}^t\varepsilon (t)\varepsilon (t + 1)\mathop \Pi \limits_{n = 0}^t \left( {\widetilde{c} - {q^{'n}}{r^{{\textstyle{1 \over 2}}}}\omega  + {q^{' - n}}{s^{{\textstyle{1 \over 2}}}}{\omega ^{'}}} \right).
\end{gather*}
By definition of $\varepsilon(m)$, it is easy to verify $( - 1){}^t\varepsilon (t)\varepsilon (t + 1)=1$.
So we have proved the relations among $\widetilde{c}$, $k, k^{-1}, f$, $ e$.
\end{proof}

The following result describes a PBW basis of $U$.
\begin{propp}
The set $\left\{ {{f^a}{e^b}{\omega ^c}{\omega ^{'d}}\left| {a,b \in \mathbb{N},c,d \in \mathbb{Z}} \right.} \right\}$ is a set of linearly independent basis of $U$.
\end{propp}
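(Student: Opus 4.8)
The plan is to show separately that the ordered monomials $f^{a}e^{b}\omega^{c}\omega'^{d}$ span $U$ and that they are linearly independent. For spanning, orient the defining relations into reduction rules with normal form ``$f$ before $e$ before $\omega$ before $\omega'$''. Relations (2.2)--(2.3) give the quasi-commutations
\begin{gather*}
\omega e\to q\,e\omega,\quad \omega' e\to q^{-1}e\omega',\quad \omega f\to q^{-1}f\omega,\quad \omega' f\to q\,f\omega',
\end{gather*}
together with the analogous rules for $\omega^{-1},\omega'^{-1}$, which move every torus letter to the right of $e$ and $f$; relation (2.1) gives $\omega'\omega\to\omega\omega'$ and the cancellations $\omega\omega^{-1}\to 1$, $\omega'\omega'^{-1}\to 1$, etc.; and (2.4) is oriented as
\begin{gather*}
ef\to -fe+\tfrac{1}{r-s}\,\omega-\tfrac{1}{r-s}\,\omega',
\end{gather*}
moving every $f$ to the left of every $e$. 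Repeated application rewrites an arbitrary product of generators as a linear combination of the monomials $f^{a}e^{b}\omega^{c}\omega'^{d}$, so they span $U$.

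For linear independence I would first treat the subalgebra $A^{+}$ generated by $e,f,\omega,\omega'$ (nonnegative torus powers, so $a,b,c,d\in\mathbb{N}$) and then localize. On $A^{+}$ the rules above involve no inverse generators; ordering monomials first by total degree in the letters and then by the number of inversions relative to $f<e<\omega<\omega'$ makes the system terminating, since each quasi-commutation and the reordering $ef\mapsto fe$ preserves the degree and lowers the inversion count, while the extra $\omega,\omega'$ terms produced by (2.4) have strictly smaller degree. By Bergman's Diamond Lemma it then suffices to check that the finitely many overlap ambiguities are confluent; these are the words $\omega'\omega e$, $\omega'\omega f$, $\omega ef$ and $\omega'ef$, and each resolves to a single normal form along both reduction paths (for example $\omega ef$ reduces to $-fe\omega+\tfrac{1}{r-s}(\omega^{2}-\omega\omega')$ either way). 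Hence $\{f^{a}e^{b}\omega^{c}\omega'^{d}\}$ is a basis of $A^{+}$. Because $\omega,\omega'$ quasi-commute with $e,f$, the set $\{\omega^{c}\omega'^{d}:c,d\ge 0\}$ is a two-sided Ore set in $A^{+}$; its Ore localization adjoins $\omega^{-1},\omega'^{-1}$ and is canonically isomorphic to $U$, and since localization sends a basis to a basis one obtains the asserted basis of $U$ with $c,d\in\mathbb{Z}$.

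The substance of the argument lies entirely in this independence half: confirming confluence of the overlap ambiguities --- which is exactly the internal consistency of the quantum relations --- and verifying that inverting the Ore set $\{\omega^{c}\omega'^{d}\}$ reproduces $U$ itself rather than a proper quotient. An alternative route that sidesteps the localization is representation-theoretic: for generic weights $(\lambda,\mu)\in(\mathbb{F}^{\times})^{2}$ construct the highest-weight module with basis $\{f^{N}v\}_{N\ge 0}$, using (2.2)--(2.3) for the eigenvalues of $\omega,\omega'$ on $f^{N}v$ and Lemma \ref{l4.2} for the scalar by which $e$ lowers $f^{N}v$; then $f^{a}e^{b}\omega^{c}\omega'^{d}$ carries $f^{N}v$ to an explicit scalar times $f^{N+a-b}v$, and separating these scalars as $N$ and the generic parameters $\lambda,\mu$ vary forces every coefficient in a vanishing linear combination to be zero.
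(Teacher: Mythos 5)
Your argument is correct, but it is not the route the paper takes. The paper also splits the claim into spanning plus independence, and its spanning step is essentially your rewriting argument in different clothing (it shows $xA\subset A$ for each generator $x$, using relation (2.5) to push $e$ past $f^{a}$). For independence, however, the paper follows Jantzen's method for $U_q(sl_2)$: it defines explicit operators $E,F,\Omega,\Omega'$ on the localized polynomial ring $B=\mathbb{C}[X,Y,Z,W,Z^{-1},W^{-1}]$, checks that they satisfy relations (2.1)--(2.4) so that $U$ acts on $B$, and then observes that $F^{a}E^{b}\Omega^{c}\Omega'^{d}(1)=X^{a}Y^{b}Z^{c}W^{d}$, which are visibly independent. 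Your primary route --- Bergman's Diamond Lemma on the nonnegative part followed by Ore localization at $\{\omega^{c}\omega'^{d}\}$ --- is precisely the alternative the paper alludes to in its opening sentence (``the basis of $U$ is given by means of the Ore extension'') but does not carry out; your list of the four overlap ambiguities, the degree-then-inversion termination order, and the sample resolution of $\omega ef$ all check out. The trade-offs: your route is purely combinatorial and concentrates the burden in the confluence checks plus the verification that $U$ is the Ore localization of the \emph{abstractly presented} inverse-free algebra (you must run the Diamond Lemma on that presented algebra, not literally on the subalgebra of $U$ generated by $e,f,\omega,\omega'$, whose presentation is not known a priori); the paper's route pays with one operator computation per defining relation and gets a concrete faithful module as a by-product. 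Your Verma-module aside would also work but, as sketched, still needs the genericity/Vandermonde argument separating the scalars as $N$ and the highest weight vary --- the paper's polynomial model avoids this because its module already has a basis indexed by the PBW monomials.
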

\begin{proof}
The basis of $U's$ is given by the means of the Ore extension \cite{Kas}. Here we use the representation-theoretic method to characterize the basis of $U$. This proof  is similar to that of the PBW-basis of ${U_q}(s{l_2})$ in \cite{Ja}. Let $A = span\left\{ {{f^a}{e^b}{\omega ^c}{\omega ^{'d}}\left| {a,b \in \mathbb{N},c,d \in \mathbb{Z}} \right.} \right\}$. Firstly, for any element $x$ in U, we  prove that $xA \subset A$. For $x=f$, $f  {f^a}{e^b}{\omega ^c}{\omega ^{'d}} = {f^{a + 1}}{e^b}{\omega ^c}{\omega ^{'d}}$. If $x = \omega ,{\omega ^{'}}$, then, from $(2.2)$ and $(2.3)$, we have
\begin{gather*}
\omega {f^a}{e^b}{\omega ^c}{\omega ^{'}}^d = {q^{b - a}}{f^a}{e^b}{\omega ^{c + 1}}{\omega ^{'}}^d,\\
{\omega ^{'}}{f^a}{e^b}{\omega ^c}{\omega ^{'}}^d = {q^{a - b}}{f^a}{e^b}{\omega ^c}{\omega ^{'}}^{d + 1}.
\end{gather*}
If $x=e$, by Equation $(2.5)$, we have
\begin{eqnarray*}
e{f^a}{e^b}{\omega ^c}{\omega ^{'}}^d &=&{(r - s)^{ - 1}}{f^{a - 1}}{e^b}\left( {{q^b}\frac{{{q^{ - a}} + ( - 1){}^{a - 1}}}{{{q^{ - 1}} + 1}}\omega  - {q^{ - b}}\frac{{{q^{  a}} + ( - 1){}^{a - 1}}}{{q + 1}}{\omega ^{'}}} \right){\omega ^c}{\omega ^{'}}^d\\
&&+\;( - 1){}^a{f^a}{e^{b + 1}}{\omega ^c}{\omega ^{'}}^d.
\end{eqnarray*}
Observe that
\begin{equation*}
{(r - s)^{ - 1}}\left( {{q^b}\frac{{{q^{ - a}} + ( - 1){}^{a - 1}}}{{{q^{ - 1}} + 1}}\omega  - {q^{ - b}}\frac{{{q^{ a}} + ( - 1){}^{a - 1}}}{{q + 1}}{\omega ^{'}}} \right)
\end{equation*}
is a polynomial with respect to $\omega$ and $\omega^{'}$, so $xA \subset A$. This proves that any element in  $U$ can be linearly expressed by  monomials $\left\{ {{f^a}{e^b}{\omega ^c}{\omega ^{'d}}\left| {a,b \in \mathbb{N},c,d \in \mathbb{Z}} \right.} \right\}$. Consider the polynomial ring $\mathbb{C}[X,Y,Z,W]$ with indeterminate elements $X,Y,Z,W$ and its localization
$$B \equiv \mathbb{C}[X,Y,Z,W,{Z^{ - 1}},{W^{ - 1}}],$$
then all monomials ${X^a}{Y^b}{Z^c}{W^d}$ with $a,b \in \mathbb{N},c,d \in \mathbb{Z}$ form a basis of $B$. Define the endomorphisms $E,F,\Omega,\Omega^{'}$ of $B$ by
\begin{gather*}
E({X^a}{Y^b}{Z^c}{W^d})
={(r - s)^{ - 1}}{X^{a - 1}}{Y^b}\left( {{q^b}\frac{{{q^{ - a}} + ( - 1){}^{a - 1}}}{{{q^{ - 1}} + 1}}Z - {q^{ - b}}\frac{{{q^a} + ( - 1){}^{a - 1}}}{{q + 1}}W} \right){Z^c}{W^d}\\
+\; ( - 1){}^a{X^a}{Y^{b + 1}}{Z^c}{W^d},\\
F({X^a}{Y^b}{Z^c}{W^d})={X^{a + 1}}{Y^b}{Z^c}{W^d}, \\
\Omega ({X^a}{Y^b}{Z^c}{W^d})={q^{b - a}}{X^a}{Y^b}{Z^{c + 1}}{W^d},\\
{\Omega ^{'}}({X^a}{Y^b}{Z^c}{W^d})={q^{a - b}}{X^a}{Y^b}{Z^c}{W^{d + 1}}.
\end{gather*}
It is not difficult to see that both $\Omega$ and ${\Omega ^{'}}$ are bijective with
\begin{gather*}
{\Omega ^{ - 1}}({X^a}{Y^b}{Z^c}{W^d}) = {q^{a - b}}{X^a}{Y^b}{Z^{c - 1}}{W^d}, \\
{\Omega ^{' - 1}}({X^a}{Y^b}{Z^c}{W^d}) = {q^{b - a}}{X^a}{Y^b}{Z^c}{W^{d - 1}}.
\end{gather*}
We now check  $(E,F,\Omega ,{\Omega ^{'}})$ satisfy the relations $(2.1)$---$(2.4)$. We only verify $(2.4)$, the other relations
are similarly proved.
\begin{equation*}
\begin{split}
(EF &+ FE)({X^a}{Y^b}{Z^c}{W^d})\\
&={(r - s)^{ - 1}}{X^{a}}{Y^b}\left( {{q^b}\frac{{{q^{ - a - 1}} + ( - 1){}^a}}{{{q^{ - 1}} + 1}}Z - {q^{ - b}}\frac{{{q^{a + 1}} + ( - 1){}^a}}{{q + 1}}W} \right){Z^c}{W^d}\\
&\quad+ ( - 1)^{a + 1}{X^{a + 1}}{Y^{b + 1}}{Z^c}{W^d}\\
&\quad + {(r - s)^{ - 1}}{X^a}{Y^b}\left( {{q^b}\frac{{{q^{ - a}} + ( - 1){}^{a - 1}}}{{{q^{ - 1}} + 1}}Z - {q^{ - b}}\frac{{{q^a} + ( - 1){}^{a - 1}}}{{q + 1}}W} \right){Z^c}{W^d} \\
&\quad+ ( - 1)^a{X^{a + 1}}{Y^{b + 1}}{Z^c}{W^d}\\
&={(r - s)^{ - 1}}{X^a}{Y^b}\left( {{q^{b - a}}Z - {q^{a - b}}W} \right){Z^c}{W^d}\\
&={(r - s)^{ - 1}}(\Omega  - {\Omega ^{'}})({X^a}{Y^b}{Z^c}{W^d}).
\end{split}
\end{equation*}
Therefore, there is a homomorphism : $U \to End(B)$ that takes $e \mapsto E, f \mapsto F, {w^{ \pm 1}} \mapsto {\Omega ^{ \pm 1}}, and \
  {w^{' \pm 1}} \mapsto {\Omega ^{' \pm 1}}$, hence it sends the monomial ${f^a}{e^b}{\omega ^c}{\omega ^{'}}^d \mapsto {F^a}{E^b}{\Omega ^c}{\Omega ^{'}}^d.$  Notice that ${F^a}{E^b}{\Omega ^c}{\Omega ^{'}}^d(1) = {X^a}{Y^b}{Z^c}{W^d}$,  this shows that $\left\{ \left.{F^a}{E^b}{\Omega ^c}{\Omega ^{'d}}\;\right|\; {a,b \in \mathbb{N},c,d \in \mathbb{Z}} \right\}$ is linearly independent, therefore the set $\left\{\left.{f^a}{e^b}{\omega ^c}{\omega ^{'d}}\;\right|\; {a,b \in \mathbb{N},c,d \in \mathbb{Z}} \right\}$ is linearly independent.
\end{proof}

 \section{\textbf{Supercenter}}
\begin{theo}\label{main}
   When $r{s^{ - 1}}$ is not a root of unity, the center of $U$ is $\mathbb{C}[\omega\omega^{'} ,{\widetilde{c}}^{2}]$.
   \end{theo}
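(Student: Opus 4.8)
The plan is to prove the two inclusions separately. The inclusion $\mathbb{C}[\omega\omega^{'},\widetilde{c}^{2}]\subseteq Z(U)$ is the easy direction: $\omega\omega^{'}$ is group-like and conjugates $e,f$ by $q\cdot q^{-1}=1$, hence is central, while $\widetilde{c}$ super-commutes with all generators (anticommuting with the fermions $e,f$ and commuting with $\omega,\omega^{'}$ by the Proposition cited from \cite{Le}), so $\widetilde{c}^{2}$ commutes with everything. It therefore remains to prove $Z(U)\subseteq\mathbb{C}[\omega\omega^{'},\widetilde{c}^{2}]$.

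For the reverse inclusion, fix $z\in Z(U)$ and expand it in the PBW basis. First I would run a weight reduction: since $z$ commutes with the group-likes $\omega,\omega^{'}$, each monomial $f^{a}e^{b}\omega^{c}\omega^{'d}$ occurring in $z$ must be fixed by conjugation by them, and as this scales the monomial by $q^{b-a}$ the non-root-of-unity hypothesis on the parameters forces $a=b$. Thus $z$ lies in the weight-zero subspace and may be written $z=\sum_{a\ge 0}f^{a}e^{a}g_{a}(\omega,\omega^{'})$ with $g_{a}\in\mathbb{C}[\omega^{\pm1},\omega^{'\pm1}]$.

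The second step is to change coordinates from $f^{a}e^{a}$ to $\widetilde{c}$. The product formula of the Proposition above expresses $f^{m}e^{m}$ as a polynomial in $\widetilde{c}$ of degree $m$ with nonzero leading scalar $\varepsilon(m)(-\eta)^{-m}$ and lower coefficients in $\mathbb{C}[\omega^{\pm1},\omega^{'\pm1}]$; this relationship is triangular, hence invertible, so I may rewrite $z=\sum_{j\ge0}h_{j}(\omega,\omega^{'})\,\widetilde{c}^{\,j}$ as a finite sum. Now I impose $[z,e]=0$. Using $\widetilde{c}e=-e\widetilde{c}$ and $e\,\omega^{c}\omega^{'d}=q^{d-c}\omega^{c}\omega^{'d}\,e$, a direct computation gives $ez-ze=\sum_{j}\bigl(h_{j}(q^{-1}\omega,q\omega^{'})-(-1)^{j}h_{j}(\omega,\omega^{'})\bigr)\,e\,\widetilde{c}^{\,j}$. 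Since the elements $e\,\widetilde{c}^{\,j}$, together with their toral prefactors, are triangular with respect to the weight-one PBW monomials, every coefficient must vanish, yielding the decoupled conditions $h_{j}(q^{-1}\omega,q\omega^{'})=(-1)^{j}h_{j}(\omega,\omega^{'})$ for all $j$.

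Finally I solve these coefficient-wise. Writing $h_{j}=\sum_{c,d}\lambda^{(j)}_{cd}\omega^{c}\omega^{'d}$ turns each condition into $q^{d-c}\lambda^{(j)}_{cd}=(-1)^{j}\lambda^{(j)}_{cd}$: for even $j$ this forces $d=c$, so $h_{j}$ is a Laurent polynomial in $\omega\omega^{'}$, while for odd $j$ it would demand a parameter power equal to $-1$, impossible under the non-root-of-unity hypothesis, so $h_{j}=0$. Hence $z=\sum_{k}h_{2k}(\omega\omega^{'})(\widetilde{c}^{2})^{k}\in\mathbb{C}[(\omega\omega^{'})^{\pm1},\widetilde{c}^{2}]$, which is the asserted center once one recalls that $\omega\omega^{'}$ is a unit; comparing top PBW terms, where $\widetilde{c}^{2}$ contributes a nonzero multiple of $f^{2}e^{2}$ and $\omega\omega^{'}$ is purely toral, shows the two generators satisfy no polynomial relation. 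I expect the main obstacle to be the middle steps: establishing the invertible triangular passage from $f^{a}e^{a}$ to $\widetilde{c}$ and the PBW-independence of the $e\,\widetilde{c}^{\,j}$, so that $[z,e]=0$ genuinely decouples per degree. The sign $(-1)^{j}$ produced there, which is precisely what eliminates the odd powers of the merely super-central $\widetilde{c}$ and leaves only $\widetilde{c}^{2}$, is the heart of the argument.
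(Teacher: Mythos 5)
Your proposal is correct and follows essentially the same route as the paper: both reduce a central element to the commutative subalgebra $T=\mathbb{C}[\widetilde{c},\omega^{\pm1},\omega^{'\pm1}]$ via the PBW basis and the triangular relation of Proposition 2.2 between $f^{m}e^{m}$ and powers of $\widetilde{c}$, and then extract the condition $q^{i-j}(-1)^{k}=1$ from commutation with $e$ and $f$, forcing $i=j$ and $k$ even. Your closing observation that the outcome is really $\mathbb{C}[(\omega\omega^{'})^{\pm1},\widetilde{c}^{2}]$ is shared by the paper's own argument (which also only concludes $i=j$ with $i,j\in\mathbb{Z}$), so it is a caveat about the theorem's phrasing rather than a divergence of method.
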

\begin{proof}
Consider the commutative subalgebra $T = \mathbb{C}[\widetilde{c},{\omega ^{ \pm 1}},{\omega ^{' \pm 1}}]$. Applying Proposition $2.2$ to the monomial ${f^a}{e^b}{\omega ^c}{\omega ^{'d}}$ $(m=min(a,b))$, we see that

\begin{equation*}
\begin{split}
{f^a}&{e^b}{\omega ^c}{\omega ^{'d}}\\
 &={f^{a - m}}{f^m}{e^m}{e^{b - m}}{\omega ^c}{\omega ^{'d}}\\
 &={f^{a - m}}\Bigl[\varepsilon (m){( - \eta )^{ - m}}\prod\limits_{n = 0}^{m - 1} {\Bigl(c - {q^{'n}}{r^{{\textstyle{1 \over 2}}}}\omega  + {q^{' - n}}{s^{{\textstyle{1 \over 2}}}}{\omega ^{'}}\Bigr)} \Bigr]{e^{b - m}}{\omega ^c}{\omega ^{'d}}\\
 &=\varepsilon (m){( - \eta )^{ - m}}{( - 1)^{(a - m)m}}{q^{(a - b)(c - d)}}\Bigl[\prod\limits_{n = 0}^{m - 1} {\Bigl(c - {q^{'n + a - m}}{r^{{\textstyle{1 \over 2}}}}\omega  + {q^{' - (n + a - m)}}{s^{{\textstyle{1 \over 2}}}}{\omega ^{'}}\Bigr)}\Bigr]\\
 &\quad\times{\omega ^c}{\omega ^{'d}}{f^{a - m}}{e^{b - m}},
\end{split}
\end{equation*}
This implies $U$ is a free $T$-module with basis $1,f,e,{f^2},{e^2}, \ldots$. For any $i \in \mathbb{N}$, $\omega {f^i} = {q^{ - i}}{f^i}\omega $, $\omega {e^i} = {q^i}{e^i}\omega$. Since $q$ is not a root of unity, so the centralizer of $\omega$ in $U$ is $T$. Similarly, the centralizer of ${{\omega ^{'}}}$ in $U$ is also $T$. Next we determine the centralizer of $f$ and $e$ in $T$.

\

Let $P(\omega ,{\omega ^{'}},\widetilde{c}) = \sum\limits_{i,j,k} {{a_{i,j,k}}} {\omega ^i}{\omega ^{'j}}{\widetilde{c}^{k}}$ with ${a_{i,j,k}} \ne 0$ be any element in $T$, and assume it commutes with $e$. Noting that $P(\omega ,{\omega ^{'}},\widetilde{c})e = eP(q\omega ,{q^{ - 1}}{\omega ^{'}}, - \widetilde{c})$, and $P(\omega ,{\omega ^{'}},\widetilde{c}) = P(q\omega ,{q^{ - 1}}{\omega ^{'}}, - \widetilde{c})$,  we get ${q^{i - j}}{( - 1)^k} = 1$. Since $q$ is not a root of unity, then $i = j$ and $k$ is even. Similarly  for $f$, we also see that $i = j$ and $k$ is even, so we get $\mathbb{C}[\omega\omega^{'} ,{\widetilde{c}}^{2}]$ is the center of $U$.
\end{proof}

To further study the structure of the algebra $U_{r, s}(osp(1,2))$, we introduce some notations.
For any $a,b \in {\mathscr F}$, denote
 $[{\omega _n},\omega _n^{'};a,b] = \frac{{{\omega _n}a - \omega _n^{'}b}}{{r - s}}$ and define two elements in ${\mathscr A}$:
\begin{eqnarray}
{Q_ - }\left( m \right) &=& \frac{{{{\left( {{r^{ - \frac{1}{2}}}{s^{\frac{1}{2}}}} \right)}^m} + {{\left( { - 1} \right)}^{m - 1}}}}{{{r^{ - \frac{1}{2}}}{s^{\frac{1}{2}}} + 1}},\\
{Q_ + }\left( m \right) &=& \frac{{{{\left( {{r^{\frac{1}{2}}}{s^{ - \frac{1}{2}}}} \right)}^m} + {{\left( { - 1} \right)}^{m - 1}}}}{{{r^{\frac{1}{2}}}{s^{ - \frac{1}{2}}} + 1}}.
\end{eqnarray}

 Also denote ${\alpha _m}\left( i \right):=\left[ {{\omega _n},\omega _n^{'};{{\left( {{r^{ - \frac{1}{2}}}{s^{\frac{1}{2}}}} \right)}^i}{Q_ - }\left( m \right),{{\left( {{r^{\frac{1}{2}}}{s^{ - \frac{1}{2}}}} \right)}^i}{Q_ + }\left( m \right)} \right]$, which is symmetric under
 $\omega\mapsto \omega', r\mapsto s$.

 \begin{lemmm} For any $p,q \in \mathbb{N}$, we have
 \begin{equation}
{e^p}{f^q} = \sum\limits_{t = 0}^{\min (p,q)} {{{( - 1)}^{(p - t)(q - t)}}} {f^{q - t}}Z(\omega ,{\omega ^{'}};p,q;t){e^{p - t}},
\end{equation}
where $Z(\omega ,{\omega ^{'}};p,q;0)=1$, and for any integer $t > 0$,
\begin{equation*}
Z(\omega ,{\omega ^{'}};p,q;t) = \sum\limits_{(*)} {\textit{sign}({i_1}} , \ldots ,{i_t}){\alpha _{\max (p,q) - t + 1}}({i_1}){\alpha _{\max (p,q) - t + 2}}({i_2})\cdots{\alpha _{\max (p,q)}}({i_t}),
\end{equation*}
where $(*)$ means summing over all $0 \le {i_1} \le  \cdots \le {i_t} \le \min (p,q) - t$, and
\begin{equation*}
\textit{sign}({i_1}, \ldots ,{i_t}) = {( - 1)^{t(\min (p,q) - t) - ({i_1} +  \ldots + {i_t})}}.
\end{equation*}
\end{lemmm}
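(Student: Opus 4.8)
The plan is to prove the formula by induction on $\min(p,q)$, always peeling off one generator from the $\min$-side so that $\max(p,q)$ stays fixed: this is the crucial choice, since it keeps the subscripts $q-t+1,\dots,q$ of the factors $\alpha_{\bullet}(\cdot)$ constant throughout the induction, and only the index bound $\min-t$ moves. Since Lemma~\ref{l4.2} supplies both single-step relations (moving $e$ past $f^m$ and $f$ past $e^m$), and the right-hand side is manifestly invariant under simultaneously exchanging $(p,e,\omega,r)$ with $(q,f,\omega',s)$ (under which $Q_-\leftrightarrow Q_+$), the regime $p\ge q$ is handled by the identical argument with the roles of $e$ and $f$ interchanged. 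I therefore treat $p\le q$, so $\min(p,q)=p$ and $\max(p,q)=q$.

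The base case $\min(p,q)=0$ is trivial: only the $t=0$ summand survives and the identity reduces to $f^q=f^q$ (resp. $e^p=e^p$). For the inductive step I write $e^pf^q=e\cdot(e^{p-1}f^q)$ and substitute the formula for $e^{p-1}f^q$, available by induction because $\min(p-1,q)=p-1$. For each summand $f^{q-t}Z(\omega,\omega';p-1,q;t)e^{p-1-t}$ I move the leading $e$ rightward in two stages: first across $f^{q-t}$ by the first identity of Lemma~\ref{l4.2}, which yields a \emph{diagonal} term $(-1)^{q-t}f^{q-t}e(\cdots)$ and a \emph{drop} term $f^{q-t-1}(\text{Cartan})(\cdots)$; then across the Cartan coefficient using $(2.2)$ and $(2.3)$, which commutes $e$ past $\omega,\omega'$ at the cost of the weight shifts $\omega\mapsto q\omega$, $\omega'\mapsto q^{-1}\omega'$. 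These shifts are precisely what advances the internal index $i$ of each factor $\alpha_m(i)$ by one, while the Cartan element produced by the drop supplies one new lowest-subscript factor $\alpha_{q-t+1}(0)$ in the $t$-th bucket.

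Collecting the two contributions yields a recurrence for
\[
Z(\omega,\omega';p,q;t)=\sum_{(*)}\mathrm{sign}(i_1,\dots,i_t)\,\alpha_{q-t+1}(i_1)\cdots\alpha_q(i_t)
\]
in terms of $Z(\omega,\omega';p-1,q;t)$ (diagonal, with every $i_j$ advanced by one) and $Z(\omega,\omega';p-1,q;t-1)$ (drop, prepended by $\alpha_{q-t+1}(0)$). At the level of index sets this is exactly the standard splitting of a sum over $0\le i_1\le\cdots\le i_t\le p-t$ into the part with $i_1\ge1$ (reindex down by one to recover the range $\le p-1-t$ of the diagonal term) and the part with $i_1=0$ (the drop, whose remaining factors $\alpha_{q-t+2}(i_2)\cdots\alpha_q(i_t)$ with $0\le i_2\le\cdots\le i_t\le p-t$ match $Z(\omega,\omega';p-1,q;t-1)$ exactly). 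One checks that the sign exponent $t(\min-t)-(i_1+\cdots+i_t)$ is invariant under the simultaneous shift $\min\mapsto\min+1$, $i_j\mapsto i_j+1$, so the diagonal matches with no extra sign, while the drop picks up a controlled factor $(-1)^{p-t}$. \textbf{The main obstacle is the fine sign-and-scalar bookkeeping in this recurrence}: one must track the residual $r,s$-power scalars generated when $e$ crosses $\omega,\omega'$ and show, using the precise normalizations of $Q_\pm$ and of the bracket $[\omega_n,\omega_n';\cdot,\cdot]$, that they cancel against the $Q_\pm$ so that the drop term reassembles with the correct sign. A first pass through the $(-1)^{(p-t)(q-t)}$ prefactors already shows the diagonal matches cleanly but the drop leaves an apparent extra $(-1)^{q-t}$, and reconciling this against the internal sign of $\alpha_{q-t+1}(0)$ is exactly the delicate point.

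As an alternative that sidesteps the abstract commutation algebra, one may use the faithful action $U\hookrightarrow\mathrm{End}(B)$ built in the proof of the PBW-basis proposition. Because the PBW monomials act by linearly independent operators, it suffices to verify the operator identity, and in fact to evaluate both sides on the family $Z^cW^d$ with $c,d\in\mathbb{Z}$. On these, $E$ acts simply as multiplication by $Y$ (its lowering term vanishes since the coefficient carries the factor $q^{0}+(-1)^{-1}=0$), so the right-hand side becomes transparent, whereas $e^pf^q$ sends $Z^cW^d\mapsto E^p(X^qZ^cW^d)$, which can again be computed by induction. Matching the coefficients of $X^{q-t}Y^{p-t}$ as $c,d$ vary recovers each $Z(\omega,\omega';p,q;t)$ and reduces the statement to the same signed multi-index summation, now phrased for explicit Laurent polynomials. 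I expect both routes to bottleneck at precisely this combinatorial identity.
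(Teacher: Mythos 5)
Your overall strategy --- induction on $\min(p,q)$ keeping $\max(p,q)$ fixed, peeling one generator off the short side and using the single-step relations of Lemma~2.1 to split each term into a diagonal contribution and a drop contribution --- is the natural one, and your reduction of the index combinatorics (splitting the sum over $0\le i_1\le\cdots\le i_t\le p-t$ according to $i_1=0$ versus $i_1\ge 1$) is the right skeleton. The alternative route via the faithful representation on $B$ is also legitimate in principle: since $F^aE^b\Omega^c\Omega'^d(1)=X^aY^bZ^cW^d$, evaluating both sides on $1$ already determines the identity. For context, the paper does not prove this lemma at all; it is quoted as the $n=1$ case of Lemma~4.2 of the reference [Sh], so any worked argument is necessarily going beyond what the paper records.

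However, as written your proposal has a genuine gap: the proof stops exactly at the point where the content lies. You explicitly report an unreconciled extra factor $(-1)^{q-t}$ in the drop term and state that you ``expect both routes to bottleneck'' at the signed multi-index identity without resolving it. This is not a minor bookkeeping footnote. The Cartan element produced by Lemma~2.1 is $\bigl(\tfrac{q'^{-m}-1}{q'^{-1}-1}\omega-\tfrac{q'^{m}-1}{q'-1}\omega'\bigr)(r-s)^{-1}$, whose coefficients are built from powers of $q^{\pm 1}=(rs)^{\pm 1/2}$, whereas $\alpha_m(i)$ is built from $Q_\pm(m)$ and the prefactors $(r^{\mp 1/2}s^{\pm 1/2})^i$, which involve the \emph{ratio} $rs^{-1}$. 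Showing that the former reassembles into the latter after commuting $\omega,\omega'$ past the remaining powers of $e$ and $f$ (each crossing contributing a power of $q$) is precisely the two-parameter subtlety that distinguishes this lemma from its one-parameter analogue, and it is also where the sign of $\alpha_{\max(p,q)-t+1}(0)$ must be matched against the global prefactor $(-1)^{(p-t)(q-t)}$. Until that verification is carried through --- or the identity is checked on $1\in B$ in the representation-theoretic variant --- the argument is a plan rather than a proof.
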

\begin{proof}
This lemma is a special case of \cite[Lemma 4.2]{Sh} for $n=1$.
\end{proof}
\begin{theo}\label{main}
   When $r{s^{ - 1}}$ is  a root of unity, $U$ has no zero-divisors.
   \end{theo}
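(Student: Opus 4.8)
The plan is to realize $U$ as an iterated \emph{Ore extension} (skew polynomial ring) over a commutative Laurent polynomial ring and then invoke the standard fact that an Ore extension $R[x;\sigma,\delta]$ of a domain $R$ by an injective endomorphism $\sigma$ is again a domain. Starting from $A_0 = \mathscr{F}[\omega^{\pm 1},\omega'^{\pm 1}]$, which is a commutative Laurent polynomial ring over the field $\mathscr{F}$ and hence a domain, relations $(2.2)$ and $(2.3)$ give $f\omega = q\omega f$ and $f\omega' = q^{-1}\omega' f$, so $f$ acts on $A_0$ through the algebra automorphism $\sigma_f$ with $\sigma_f(\omega)=q\omega$, $\sigma_f(\omega')=q^{-1}\omega'$ and no derivation term. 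Thus $A_1 := A_0[f;\sigma_f]$ is an Ore extension of a domain by an automorphism, hence a domain.

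Next I would adjoin $e$. From $(2.2)$, $(2.3)$ one reads $e\omega = q^{-1}\omega e$ and $e\omega' = q\omega' e$, while the anticommutator $(2.4)$ gives $ef = -fe + (r-s)^{-1}(\omega-\omega')$. Under the Ore convention $e\cdot a = \sigma_e(a)e + \delta_e(a)$, this is exactly the data of an extension $A_1[e;\sigma_e,\delta_e]$, where $\sigma_e$ is determined by $\sigma_e(\omega)=q^{-1}\omega$, $\sigma_e(\omega')=q\omega'$, $\sigma_e(f)=-f$, and $\delta_e$ is the map with $\delta_e|_{A_0}=0$ and $\delta_e(f)=(r-s)^{-1}(\omega-\omega')$. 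Because the $\mathbb{Z}_2$-grading with $\deg e=\deg f=1$ is compatible with $(2.1)$--$(2.4)$, the PBW basis $\{f^a e^b \omega^c \omega'^d\}$ of Proposition $2.3$ matches the free-module basis of this iterated Ore extension (after reordering the commuting torus factors), so the natural map $A_1[e;\sigma_e,\delta_e]\to U$ is an isomorphism.

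The main obstacle is the only genuinely non-formal point: verifying that $\sigma_e$ is a well-defined automorphism of the \emph{twisted} ring $A_1$ (not merely of $A_0$) and that $\delta_e$ is a genuine $\sigma_e$-derivation. The fermionic sign $\sigma_e(f)=-f$ must be compatible with the twist $f\omega = q\omega f$ already imposed at the first stage, so I would check $\sigma_e(f\omega)=\sigma_e(q\omega f)$ and the Leibniz identity $\delta_e(f\omega)=\sigma_e(f)\delta_e(\omega)+\delta_e(f)\omega = \delta_e(q\omega f)$ on the generators. Both reduce to the commutativity of $\omega,\omega'$ together with the scalar identities behind $(2.2)$--$(2.4)$, and they hold precisely because $r-s$ is invertible (guaranteed by $r^2\neq s^2$); this is exactly where the defining super-relations enter, and it is the step most easily mishandled.

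Once these compatibilities are in place, $\sigma_e$ is injective (indeed an automorphism, with inverse $\omega\mapsto q\omega$, $\omega'\mapsto q^{-1}\omega'$, $f\mapsto -f$), so the leading-coefficient argument for Ore extensions over a domain shows $U=A_1[e;\sigma_e,\delta_e]$ has no zero-divisors. I would remark that this argument is insensitive to whether $rs^{-1}$ is a root of unity; equivalently, one may pass to the associated graded algebra for the filtration $\deg(f^a e^b \omega^c \omega'^d)=a+b$, in which the anticommutator degenerates to $ef=-fe$ and $\mathrm{gr}(U)$ becomes a quantum affine space over $A_0$, manifestly a domain, whence so is $U$.
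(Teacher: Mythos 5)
Your argument is correct, but it reaches the conclusion by a different route than the paper, and the statement deserves the comparison. The paper works directly on the PBW basis: it orders the monomials $f^a\omega^c\omega'^{\,d}e^b$ lexicographically, uses the explicit reordering formula $e^pf^q=\sum_t(-1)^{(p-t)(q-t)}f^{q-t}Z(\omega,\omega';p,q;t)e^{p-t}$ of Lemma 3.1 to extract the top term of a product $FG$, and checks that the leading coefficient of $FG$ is a nonzero scalar (a sign times a power of $q$) times the product of the leading coefficients of $F$ and $G$. Your iterated Ore extension $\mathscr F[\omega^{\pm1},\omega'^{\pm1}][f;\sigma_f][e;\sigma_e,\delta_e]$ packages exactly that leading-coefficient mechanism into a standard ring-theoretic fact, so the two proofs share the same engine; what you gain is that Lemma 3.1 is not needed at all --- the only computations are the compatibility of $\sigma_e$ and $\delta_e$ with the relation $f\omega=q\omega f$ and its $\omega'$ analogue, and these indeed reduce to $(\omega-\omega')\omega=\omega(\omega-\omega')$ and matching powers of $q$. (One small correction: invertibility of $r-s$ is only needed to write down $\delta_e(f)$ in the first place, not for the compatibility checks themselves.) Your observations that the conclusion is independent of whether $rs^{-1}$ is a root of unity, and that one may instead pass to $\mathrm{gr}\,U$ for the filtration by $a+b$, where the anticommutator degenerates to $ef=-fe$ and one lands in a quantum affine space over the torus, are both correct and give a cleaner proof than the paper's explicit expansion. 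The one point to phrase carefully is the direction of the identification: the presentation of $U$ gives a surjection $U\to A_1[e;\sigma_e,\delta_e]$, and Proposition 2.3 (the monomials span $U$ while their images form the free-module basis of the Ore extension) upgrades it to an isomorphism.
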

\begin{proof}
Using the PBW theorem, let us order the monomials with respect to their exponents lexicographically.
Then we prove the theorem by using the total order. 

Let $F$ and $G$ be any two elements in $U$, each is a sum of monomials in $f, \omega, \omega', e$. According to the $f$-degree, we can write
\begin{eqnarray*}
F &=& {F^N} + {F^{N - 1}} +  \cdots  + {F^0},
\end{eqnarray*}
where ${F^i}$ denotes the summand containing $f^i$. Explicitly, we can write
\begin{equation*}
\begin{split}
&{f^N} \Bigg{[} {{\omega ^{{s_{N{}_1}}}}{\omega ^{^{'}}}^{t_{{N_1}}^1}\left( {{a_{N,{s_{N{}_1}},t_{{N_1}}^1,u_{{N_1}}^{{1_1}}}}{e^{u_{{N_1}}^{{1_1}}}} + {a_{N,{s_{N{}_1}},t_{{N_1}}^1,u_{{N_1}}^{{1_2}}}}{e^{u_{{N_1}}^{{1_2}}}} +  \cdots  + {a_{N,{s_{N{}_1}},t_{{N_1}}^1,u_{{N_1}}^{{1_{{i_{N{}_1}}}}}}}{e^{u_{{N_1}}^{{1_{{i_{N{}_1}}}}}}}} \right)}  \\
   &\ +{\omega ^{{s_{N{}_1}}}}{\omega ^{^{'}}}^{t_{{N_1}}^2}\left( {{a_{N,{s_{N{}_1}},t_{{N_1}}^2,u_{{N_1}}^{{2_1}}}}{e^{u_{{N_1}}^{{2_1}}}} + {a_{N,{s_{N{}_1}},t_{{N_1}}^2,u_{{N_1}}^{{2_2}}}}{e^{u_{{N_1}}^{{2_2}}}} +  \cdots  + {a_{N,{s_{N{}_1}},t_{{N_1}}^2,u_{{N_1}}^{{2_{{i_{N{}_1}}}}}}}{e^{u_{{N_1}}^{{2_{{i_{N{}_1}}}}}}}} \right) +  \cdots \\
   &\ +{\omega ^{{s_{N{}_1}}}}{\omega ^{^{'}}}^{t_{{N_1}}^{{i_{N{}_1}}}}\left( {{a_{N,{s_{N{}_1}},t_{{N_1}}^{{i_{N{}_1}}},u_{{N_1}}^{{i_{N{}_{{1_1}}}}}}}{e^{u_{{N_1}}^{{i_{N{}_{{1_1}}}}}}} +   \cdots  +
   {a_{N,{s_{N{}_1}},t_{{N_1}}^{{i_{N{}_1}}},u_{{N_1}}^{{i_{N{}_{{1_{{i_{N{}_1}}}}}}}}}}{e^{u_{{N_1}}^{{i_{N{}_{{1_{{i_{N{}_1}}}}}}}}}}} \right)
\\
  &\ +{\omega ^{{s_{N{}_1}}}}{\omega ^{^{'}}}^{t_{{N_2}}^1}\left( {{a_{N,{s_{N{}_1}},t_{{N_2}}^1,u_{{N_2}}^{{1_1}}}}{e^{u_{{N_2}}^{{1_1}}}} + {a_{N,{s_{N{}_1}},t_{{N_2}}^1,u_{{N_2}}^{{1_2}}}}{e^{u_{{N_2}}^{{1_2}}}} +
    \cdots  + {a_{N,{s_{N{}_1}},t_{{N_2}}^1,u_{{N_2}}^{{1_{{i_{N{}_2}}}}}}}{e^{u_{{N_2}}^{{1_{{i_{N{}_2}}}}}}}} \right)\\
&\ +{\omega ^{{s_{N{}_1}}}}{\omega ^{^{'}}}^{t_{{N_2}}^2}\left( {{a_{N,{s_{N{}_1}},t_{{N_2}}^2,u_{{N_2}}^{{2_1}}}}{e^{u_{{N_2}}^{{2_1}}}} + {a_{N,{s_{N{}_1}},t_{{N_2}}^2,u_{{N_2}}^{{2_2}}}}{e^{u_{{N_2}}^{{2_2}}}} +  \cdots  + {a_{N,{s_{N{}_1}},t_{{N_2}}^2,u_{{N_2}}^{{2_{{i_{N{}_2}}}}}}}{e^{u_{{N_2}}^{{2_{{i_{N{}_2}}}}}}}}\right) +  \cdots  \\
&\ +{\omega ^{{s_{N{}_1}}}}{\omega ^{^{'}}}^{t_{{N_2}}^{{i_{N{}_2}}}}\left( {{a_{N,{s_{N{}_1}},t_{{N_2}}^{{i_{N{}_2}}},u_{{N_2}}^{_{{i_{N{}_{{2_1}}}}}}}}{e^{u_{{N_2}}^{_{{i_{N{}_{{2_1}}}}}}}} + 
 \cdots  + {a_{N,{s_{N{}_1}},t_{{N_2}}^{{i_{N{}_2}}},u_{{N_2}}^{{i_{N{}_{{2_{{i_{N{}_2}}}}}}}}}}{e^{u_{{N_2}}^{{i_{N{}_{{2_{{i_{N{}_2}}}}}}}}}}} \right)\\
&\  +  \cdots  + {\omega ^{{s_{N{}_1}}}}{\omega ^{^{'}}}^{t_{{N_p}}^1}\left( {{a_{N,{s_{N{}_1}},t_{{N_p}}^1,u_{{N_p}}^{_{{1_1}}}}}{e^{u_{{N_p}}^{_{{1_1}}}}} + {a_{N,{s_{N{}_1}},t_{{N_p}}^1,u_{{N_p}}^{_{{1_2}}}}}{e^{u_{{N_p}}^{{1_2}}}} +  \cdots  + {a_{N,{s_{N{}_1}},t_{{N_p}}^1,u_{{N_p}}^{_{{1_{{i_{{N_p}}}}}}}}}{e^{u_{{N_p}}^{_{{1_{{i_{{N_p}}}}}}}}}} \right)\\
&\  +  {\omega ^{{s_{N{}_1}}}}{\omega ^{^{'}}}^{t_{{N_p}}^2}\left( {{a_{N,{s_{N{}_1}},t_{{N_p}}^2,u_{{N_p}}^{_{{2_1}}}}}{e^{u_{{N_p}}^{_{{2_1}}}}} + {a_{N,{s_{N{}_1}},t_{{N_p}}^2,u_{{N_p}}^{_{{2_2}}}}}{e^{u_{{N_p}}^{{2_2}}}} +  \cdots  + {a_{N,{s_{N{}_1}},t_{{N_p}}^2,u_{{N_p}}^{_{{2_{{i_{{N_p}}}}}}}}}{e^{u_{{N_p}}^{_{{2_{{i_{{N_p}}}}}}}}}} \right)+  \cdots \\
&\ + {\omega ^{{s_{{N_1}}}}}{\omega ^{'t_{{N_p}}^{{i_{{N_p}}}}}}\left( {{a_{N,{s_{{N_1}}},t_{{N_p}}^{{i_{{N_p}}}},u_{{N_p}}^{{i_{_{{N_{{p_1}}}}}}}}}{e^{u_{{N_p}}^{{i_{_{{N_{{p_1}}}}}}}}} + 
\cdots  + {a_{N,{s_{{N_1}}},t_{{N_p}}^{{i_{{N_p}}}},u_{{N_p}}^{{i_{_{{N_{{p_{{i_{{N_p}}}}}}}}}}}}}{e^{u_{{N_p}}^{{i_{_{{N_{{p_{{i_{{N_p}}}}}}}}}}}}}} \right)\\
&\  + \textit{lower-order\ terms\ in\ } \omega\Bigg{]},
\end{split}
\end{equation*}
where the exponents $t_{{N_l}}^1 > t_{{N_l}}^2 >  \cdots  > t_{{N_l}}^{{i_{N{}_l}}}$ and $u_{{N_l}}^{_{{j_1}}} > u_{{N_l}}^{_{{j_2}}} >  \cdots  > u_{{N_l}}^{_{{j_{{i_{N{}_l}}}}}}(l = 1,2, \ldots ,p ; j = 1,2, \ldots ,{i_{N{}_l}})$. Clearly, the sum ${F^{N - 1}} +  \cdots  + {F^0}$ denotes the lower degree terms of $F$ in the lexicographic order. 

Similarly, we write the element $G$
\begin{eqnarray*}
G &=& {G^R} + {G^{R - 1}} +  \cdots  + {G^0},
\end{eqnarray*}
where ${G^R}$ denotes the summand containing $f^R$ given by
\begin{equation*}
\begin{split}
&{f^R} \Bigg{[} {{\omega ^{{s_{R{}_1}}}}{\omega ^{^{'}}}^{t_{{R_1}}^1}\left( {{b_{R,{s_{R{}_1}},t_{{R_1}}^1,u_{{R_1}}^{{1_1}}}}{e^{u_{{R_1}}^{{1_1}}}} + {b_{N,{s_{R{}_1}},t_{{R_1}}^1,u_{{R_1}}^{{1_2}}}}{e^{u_{{R_1}}^{{1_2}}}} +  \cdots  + {b_{N,{s_{R{}_1}},t_{{R_1}}^1,u_{{R_1}}^{{1_{{i_{R{}_1}}}}}}}{e^{u_{{R_1}}^{{1_{{i_{R{}_1}}}}}}}} \right)}  \\
   & +{\omega ^{{s_{R{}_1}}}}{\omega ^{^{'}}}^{t_{{R_1}}^2}\left( {{b_{R,{s_{R{}_1}},t_{{R_1}}^2,u_{{R_1}}^{{2_1}}}}{e^{u_{{R_1}}^{{2_1}}}} + {b_{R,{s_{R{}_1}},t_{{R_1}}^2,u_{{R_1}}^{{2_2}}}}{e^{u_{{R_1}}^{{2_2}}}} +  \cdots  + {b_{R,{s_{R{}_1}},t_{{N_1}}^2,u_{{R_1}}^{{2_{{i_{R{}_1}}}}}}}{e^{u_{{R_1}}^{{2_{{i_{R{}_1}}}}}}}} \right) +  \cdots  \\
   &+{\omega ^{{s_{R{}_1}}}}{\omega ^{^{'}}}^{t_{{R_1}}^{{i_{R{}_1}}}}\left( {{b_{R,{s_{R{}_1}},t_{{R_1}}^{{i_{R{}_1}}},u_{{R_1}}^{{i_{R{}_{{1_1}}}}}}}{e^{u_{{R_1}}^{{i_{R{}_{{1_1}}}}}}} + {b_{R,{s_{R{}_1}},t_{{R_1}}^{{i_{R{}_1}}},u_{{R_1}}^{{i_{R{}_{{1_2}}}}}}}{e^{u_{{R_1}}^{{i_{R{}_{{1_2}}}}}}} +  \cdots  + {b_{R,{s_{R{}_1}},t_{{R_1}}^{{i_{R{}_1}}},u_{{R_1}}^{{i_{R{}_{{1_{{i_{R{}_1}}}}}}}}}}{e^{u_{{R_1}}^{{i_{R{}_{{1_{{i_{R{}_1}}}}}}}}}}} \right)\\
   &+{\omega ^{{s_{R{}_1}}}}{\omega ^{^{'}}}^{t_{{R_2}}^1}\left( {{b_{R,{s_{R{}_1}},t_{{R_2}}^1,u_{{R_2}}^{{1_1}}}}{e^{u_{{R_2}}^{{1_1}}}} + {b_{R,{s_{R{}_1}},t_{{R_2}}^1,u_{{R_2}}^{{1_2}}}}{e^{u_{{R_2}}^{{1_2}}}} +  \cdots  + {b_{R,{s_{R{}_1}},t_{{R_2}}^1,u_{{R_2}}^{{1_{{i_{R{}_2}}}}}}}{e^{u_{{R_2}}^{{1_{{i_{R{}_2}}}}}}}} \right)\\
 &+{\omega ^{{s_{R{}_1}}}}{\omega ^{^{'}}}^{t_{{R_2}}^2}\left( {{b_{R,{s_{R{}_1}},t_{{R_2}}^2,u_{{R_2}}^{{2_1}}}}{e^{u_{{R_2}}^{{2_1}}}} + {b_{R,{s_{R{}_1}},t_{{R_2}}^2,u_{{R_2}}^{{2_2}}}}{e^{u_{{R_2}}^{{2_2}}}} +  \cdots  + {b_{R,{s_{R{}_1}},t_{{R_2}}^2,u_{{R_2}}^{{2_{{i_{R{}_2}}}}}}}{e^{u_{{R_2}}^{{2_{{i_{R{}_2}}}}}}}}\right) +  \cdots  \\
&+{\omega ^{{s_{R{}_1}}}}{\omega ^{^{'}}}^{t_{{R_2}}^{{i_{R{}_2}}}}\left( {{b_{R,{s_{R{}_1}},t_{{R_2}}^{{i_{R{}_2}}},u_{{R_2}}^{_{{i_{R{}_{{2_1}}}}}}}}{e^{u_{{R_2}}^{_{{i_{R{}_{{2_1}}}}}}}} + {b_{R,{s_{R{}_1}},t_{{R_2}}^{{i_{R{}_2}}},u_{{R_2}}^{{i_{R{}_{{2_2}}}}}}}{e^{u_{{R_2}}^{{i_{R{}_{{2_2}}}}}}} +  \cdots  + {b_{R,{s_{R{}_1}},t_{{R_2}}^{{i_{R{}_2}}},u_{{R_2}}^{{i_{R{}_{{2_{{i_{R{}_2}}}}}}}}}}{e^{u_{{R_2}}^{{i_{R{}_{{2_{{i_{R{}_2}}}}}}}}}}} \right)\\
& +  \cdots  + {\omega ^{{s_{R{}_1}}}}{\omega ^{^{'}}}^{t_{{R_q}}^1}\left( {{a_{R,{s_{R{}_1}},t_{{R_q}}^1,u_{{R_q}}^{_{{1_1}}}}}{e^{u_{{R_q}}^{_{{1_1}}}}} + {b_{N,{s_{R{}_1}},t_{{R_q}}^1,u_{{R_q}}^{_{{1_2}}}}}{e^{u_{{R_q}}^{{1_2}}}} +  \cdots  + {b_{R,{s_{R{}_1}},t_{{R_q}}^1,u_{{R_q}}^{_{{1_{{i_{{R_q}}}}}}}}}{e^{u_{{R_q}}^{_{{1_{{i_{{R_q}}}}}}}}}} \right)\\
& +  {\omega ^{{s_{R{}_1}}}}{\omega ^{^{'}}}^{t_{{R_q}}^2}\left( {{b_{R,{s_{R{}_1}},t_{{R_q}}^2,u_{{R_q}}^{_{{2_1}}}}}{e^{u_{{R_q}}^{_{{2_1}}}}} + {b_{R,{s_{R{}_1}},t_{{R_q}}^2,u_{{R_q}}^{_{{2_2}}}}}{e^{u_{{R_q}}^{{2_2}}}} +  \cdots  + {b_{R,{s_{R{}_1}},t_{{R_q}}^2,u_{{R_q}}^{_{{2_{{i_{{R_q}}}}}}}}}{e^{u_{{R_q}}^{_{{2_{{i_{{R_q}}}}}}}}}} \right)+  \cdots
\end{split}
\end{equation*}
\begin{equation*}
\begin{split}
& +{\omega ^{{s_{R{}_1}}}}{\omega ^{^{'}}}^{t_{{R_q}}^{{i_{R{}_q}}}}\left( {{b_{R,{s_{R{}_1}},t_{{R_q}}^{{i_{R{}_q}}},u_{{R_q}}^{{i_{R{}_{{q_1}}}}}}}{e^{u_{{R_q}}^{{i_{R{}_{{q_1}}}}}}} + {b_{R,{s_{R{}_1}},t_{{R_q}}^{{i_{R{}_q}}},u_{{R_q}}^{{i_{R{}_{{q_2}}}}}}}{e^{u_{{R_q}}^{{i_{R{}_{{q_2}}}}}}} +  \cdots  + {b_{R,{s_{R{}_1}},t_{{R_q}}^{{i_{R{}_q}}},u_{{R_1}}^{{i_{R{}_{{q_{{i_{R{}_q}}}}}}}}}}{e^{u_{{R_1}}^{{i_{R{}_{{q_{{i_{R{}_q}}}}}}}}}}} \right)\\
&+\textit{lower-order\ terms\ with\ respect\ to\ } \omega \Bigg{]}.
\end{split}
\end{equation*}
Here the exponents $t_{{R_l}}^1 > t_{{R_l}}^2 >  \cdots  > t_{{R_l}}^{{i_{R{}_l}}}$ and $u_{{R_l}}^{_{{j_1}}} > u_{{R_l}}^{_{{j_2}}} >  \cdots  > u_{{R_l}}^{_{{j_{{i_{R{}_l}}}}}}(l = 1,2, \ldots ,q ; j = 1,2, \ldots ,{i_{R{}_l}})$, and the lower ordered terms are
${G^{R - 1}} +  \cdots  + {G^0}$ for $G$.

From Lemma 3.1, it follows that
 \begin{equation*}
 \begin{split}
 &({f^N}{\omega ^{{s_{N{}_1}}}}{\omega ^{^{'}}}^{t_{{N_1}}^1}{e^{u_{{N_1}}^{{1_1}}}})({f^R}{\omega ^{{s_{R{}_1}}}}{\omega ^{^{'}}}^{t_{{R_1}}^1}{e^{u_{{R_1}}^{{1_1}}}})\\
 &={f^N}{\omega ^{{s_{N{}_1}}}}{\omega ^{^{'}}}^{t_{{N_1}}^1}\left( {\sum\limits_{t = 0}^{\min (u_{{N_1}}^{{1_1}},R)} {{{( - 1)}^{(u_{{N_1}}^{{1_1}} - t)(R - t)}}} {f^{R - t}}Z(\omega ,{\omega ^{'}};u_{{N_1}}^{{1_1}},R;t){e^{u_{{N_1}}^{{1_1}} - t}}} \right){\omega ^{{s_{R{}_1}}}}{\omega ^{^{'}}}^{t_{{R_1}}^1}{e^{u_{{R_1}}^{{1_1}}}}  \\
   &= {( - 1)^{u_{{N_1}}^{{1_1}}R}}{f^N}{\omega ^{{s_{N{}_1}}}}{\omega ^{^{'}}}^{t_{{N_1}}^1}{f^R}Z(\omega ,{\omega ^{'}};u_{{N_1}}^{{1_1}},R;0){e^{u_{{N_1}}^{{1_1}}}}{\omega ^{{s_{R{}_1}}}}{\omega ^{^{'}}}^{t_{{R_1}}^1}{e^{u_{{R_1}}^{{1_1}}}}
   + \textit{lower-order\ terms\ on } f
\\
   &={( - 1)^{u_{{N_1}}^{{1_1}}R}}{q^{R(t_{{N_1}}^1 - {s_{N{}_1}}) + u_{{N_1}}^{{1_1}}(t_{{R_1}}^1 - {s_{R{}_1}})}}{f^{N + R}}{\omega ^{{s_{N{}_1}} + {s_{R{}_1}}}}{\omega ^{^{'}}}^{t_{{N_1}}^{{1_1}} + t_{{R_1}}^1}{e^{u_{{N_1}}^{{1_1}} + u_{{R_1}}^{{1_1}}}} + \textit{lower-order\ terms on\ } f.
\end{split}
\end{equation*}
Then we have
 \begin{equation*}
 \begin{split}
 FG &= {F^N}{G^R} + \mathrm{lower\, ordered \, terms}\\
&={f^N}{\omega ^{{s_{N{}_1}}}}{\omega ^{^{'}}}^{t_{{N_1}}^1}{a_{N,{s_{N{}_1}},t_{{N_1}}^1,u_{{N_1}}^{{1_1}}}}
{e^{u_{{N_1}}^{{1_1}}}}{f^R}{\omega ^{{s_{R{}_1}}}}{\omega ^{^{'}}}^{t_{{R_1}}^1}{b_{R,{s_{R{}_1}},t_{{R_1}}^1,u_{{R_1}}^{{1_1}}}}
{e^{u_{{R_1}}^{{1_1}}}}\\
&\quad+ \mathrm{lower\, ordered \, terms} \\
&={( - 1)^{u_{{N_1}}^{{1_1}}R}}{q^{R(t_{{N_1}}^1 - {s_{N{}_1}}) + u_{{N_1}}^{{1_1}}(t_{{R_1}}^1 - {s_{R{}_1}})}}{a_{N,{s_{N{}_1}},t_{{N_1}}^1,u_{{N_1}}^{{1_1}}}}
{b_{R,{s_{R{}_1}},t_{{R_1}}^1,u_{{R_1}}^{{1_1}}}}\times\\
&\quad\times{f^{N + R}}{\omega ^{{s_{N{}_1}} + {s_{R{}_1}}}}{\omega ^{^{'}}}^{t_{{N_1}}^1 + t_{{R_1}}^1}{e^{u_{{N_1}}^{{1_1}} + u_{{R_1}}^{{1_1}}}}\\
&\quad+ \mathrm{lower\, ordered \, terms}.
\end{split}
\end{equation*}
Note that the leading coefficient
$${q^{R(t_{{N_1}}^1 - {s_{N{}_1}}) + u_{{N_1}}^{{1_1}}(t_{{R_1}}^1 - {s_{R{}_1}})}}{a_{N,{s_{N{}_1}},t_{{N_1}}^1,u_{{N_1}}^{{1_1}}}}{b_{R,{s_{R{}_1}},t_{{R_1}}^1,u_{{R_1}}^{{1_1}}}}{f^{N + R}}{\omega ^{{s_{N{}_1}} + {s_{R{}_1}}}}{\omega ^{^{'}}}^{t_{{N_1}}^1 + t_{{R_1}}^1}{e^{u_{{N_1}}^{{1_1}} + u_{{R_1}}^{{1_1}}}}$$
is nonzero, the leading summand $(FG)^{N+R}$ of $FG$ is nonzero, so $FG\neq 0$.
\end{proof}
For the rest of this section, suppose $\l$ is an integer larger than $2$ and $q$ is a primitive ${\l ^{th}}$-root of unity. The integer $L$ is the smallest even multiple of $\l$ (that is, $L=\l$ if $\l$ is even and $L=2l$ if $\l$ is odd).
The integer ${\l'}$ is ${L}/2$ if $\l$ is twice an odd integer and $L$ otherwise. Then ${q^{'}} =  - q$ is a primitive ${\l ^{th}}$ root of unity, and ${q^L} = 1$. Evaluation of $(2.5)$ for $m=L$ gives ${f^L}e - e{f^L} = 0$, so $e$ commutes with ${f^L}$. Applying $(2.6)$, we can get $f$ commutes with ${e^L}$.
Both $E \equiv {e^L}$ and $F \equiv {f^L}$ also commute with $\omega ,{\omega ^{'}}$, consequently, $E$ and $F$ are central elements. Moreover, $\omega {\omega ^{'}},{\widetilde{c}}^{2},{\omega ^{ \pm l}},{\omega ^{' \pm l}}$ are also central elements. The following result determines the center of $U$ when $r{s^{ - 1}}$  is a root of unity.
\begin{theo}\label{main}
   When $r{s^{ - 1}}$ is  a root of unity, the center of $U$  contains
   \begin{equation*}
   \mathbb{C}[\omega {\omega ^{'}},{\widetilde{c}}^{2},{\omega ^{ \pm 1}},{\omega ^{{'} \pm 1}},F] + \mathbb{C}[\omega {\omega ^{'}},{\widetilde{c}}^{2},{\omega ^{ \pm 1}},{\omega ^{{'} \pm 1}},E],
   \end{equation*}

   $(1)$ If $l$ is odd, this is the center of the algebra. 

   $(2)$ If $l$ is even, the center is the free module on  $$\mathbb{C}[\omega {\omega ^{'}},{\widetilde{c}}^{2},{\omega ^{ \pm 1}},{\omega ^{{'} \pm 1}},F] + \mathbb{C}[\omega {\omega ^{'}},{\widetilde{c}}^{2},{\omega ^{ \pm 1}},{\omega ^{{'} \pm 1}},E]$$
   with basis $1$, ${\omega ^{{\textstyle{1 \over 2}}}}\widetilde{c}$, ${\omega ^{'{\textstyle{1 \over 2}}}}\widetilde{c}$.
   \end{theo}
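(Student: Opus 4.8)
The plan is to identify the center as the intersection of the centralizers of the Cartan generators $\omega,\omega'$ and of the fermionic generators $e,f$, reusing the free $T$-module description established in the proof of Theorem 3.1: with $T=\mathbb{C}[\widetilde{c},\omega^{\pm1},\omega'^{\pm1}]$ one has $U=\bigoplus_{k\ge0}Tf^{k}\oplus\bigoplus_{k\ge1}Te^{k}$, and this decomposition remains valid at a root of unity since its proof used only the product formula of Proposition 2.2. I first record that when $\ell$ is even one has $q^{\ell/2}=-1$ (as $q$ is a primitive $\ell$-th root of unity), whence $\omega^{\ell/2}e=-e\omega^{\ell/2}$ and $\omega^{\ell/2}f=-f\omega^{\ell/2}$; combined with Proposition 2.1 this shows $\omega^{\ell/2}\widetilde{c}$ and $\omega'^{\ell/2}\widetilde{c}$ commute with $e$ and $f$ (anticommutation times anticommutation), and they plainly commute with $\omega,\omega'$, so they are central. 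Together with $E,F,\omega\omega',\widetilde{c}^{2},\omega^{\pm\ell},\omega'^{\pm\ell}$ (central by the discussion preceding the statement) this gives the inclusion $\supseteq$. (Here the half-integer exponents in the statement are read as $\omega^{\ell/2}$ etc., the only reading under which these elements are central.)

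For the reverse inclusion I take a central $z$ and first cut it down using $\omega,\omega'$. Since $\omega f^{k}\omega^{-1}=q^{-k}f^{k}$, $\omega e^{k}\omega^{-1}=q^{k}e^{k}$, and $\omega$ commutes with $T$, the equation $\omega z\omega^{-1}=z$ forces every $T$-basis component $f^{k},e^{k}$ occurring in $z$ to satisfy $\ell\mid k$; the same holds for $\omega'$. Hence $z\in C:=T\oplus\bigoplus_{m\ge1}\bigl(Tf^{m\ell}\oplus Te^{m\ell}\bigr)$, the precise analogue of the place where, in Theorem 3.1, the non-root-of-unity hypothesis collapsed $C$ to $T$.

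Next I impose $[z,e]=[z,f]=0$. The organizing device is that $e$ and $f$ move through $T$ by algebra automorphisms: $et=\sigma(t)e$ and $ft=\sigma'(t)f$ for $t\in T$, where $\sigma\colon\omega\mapsto q^{-1}\omega,\ \omega'\mapsto q\omega',\ \widetilde{c}\mapsto-\widetilde{c}$ and $\sigma'=\sigma^{-1}$. Thus $t\in T$ commutes with both $e$ and $f$ iff it is fixed by $\sigma$ and $\sigma'$, i.e. a monomial $\omega^{a}\omega'^{b}\widetilde{c}^{j}$ survives iff $q^{2(a-b)}=1$ and $q^{a-b}(-1)^{j}=1$. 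For $\ell$ odd this yields $a\equiv b\ (\mathrm{mod}\ \ell)$ and $j$ even, so the $\{e,f\}$-centralizer in $T$ is exactly $Z_{0}:=\mathbb{C}[\omega\omega',\widetilde{c}^{2},\omega^{\pm\ell},\omega'^{\pm\ell}]$; for $\ell$ even the weaker congruence $a\equiv b\ (\mathrm{mod}\ \ell/2)$ with matching parity (again via $q^{\ell/2}=-1$) enlarges it to $Z_{0}\oplus Z_{0}\,\omega^{\ell/2}\widetilde{c}\oplus Z_{0}\,\omega'^{\ell/2}\widetilde{c}$. When $\ell$ is even each $f^{m\ell}=F^{m}$ and $e^{m\ell}=E^{m}$ is central, so $[z,e]=[z,f]=0$ decouples coefficientwise (the leading $f$-degrees of the $eF^{m},eE^{m}$ blocks are distinct) and forces every $T$-coefficient of $z$ into this centralizer; assembling and using that $(\omega^{\ell/2}\widetilde{c})^{2}=\omega^{\ell}\widetilde{c}^{2}$, $(\omega'^{\ell/2}\widetilde{c})^{2}=\omega'^{\ell}\widetilde{c}^{2}$ and $(\omega^{\ell/2}\widetilde{c})(\omega'^{\ell/2}\widetilde{c})=(\omega\omega')^{\ell/2}\widetilde{c}^{2}$ all lie in $Z_{0}$ gives exactly the rank-$3$ free module over $\mathbb{C}[\ldots,F]+\mathbb{C}[\ldots,E]$ with basis $1,\omega^{\ell/2}\widetilde{c},\omega'^{\ell/2}\widetilde{c}$.

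The step I expect to be the main obstacle is the odd-$\ell$ case of this last reduction. There $L=2\ell$, so $F=f^{2\ell}$ and $E=e^{2\ell}$, and among the blocks $Tf^{m\ell}$ the ones with $m$ odd are not central: by Lemma 2.1, $f^{m\ell}$ super-commutes with $e$ with sign $(-1)^{m\ell}=-1$, so the coefficientwise decoupling fails and one must argue with leading terms. Ordering the PBW monomials lexicographically as in the proof of Theorem 3.2, I would show that the highest $f$-degree odd block $t_{m}f^{m\ell}$ contributes to $[z,e]$ a term whose leading part is a nonzero multiple of $f^{m\ell}e$ that no other monomial can cancel, forcing $t_{m}=0$; the analogous computation in $e$ kills the odd $e^{m\ell}$ blocks. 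This bookkeeping is delicate precisely because $\widetilde{c}$ already contains $fe$, so "leading $f$-degree" must be tracked with care. Once the odd blocks are gone only $F^{m}=f^{2m\ell}$ and $E^{m}=e^{2m\ell}$ remain, the even-case argument applies, and since $Z_{0}$ is now the full $\{e,f\}$-centralizer in $T$ the center collapses to $\mathbb{C}[\ldots,F]+\mathbb{C}[\ldots,E]$. Throughout, Proposition 2.2 at $m=L$ expresses $FE=f^{L}e^{L}$ as a polynomial in $\widetilde{c},\omega,\omega'$, which is the relation making the center a sum of the two polynomial subalgebras rather than a polynomial ring in both $E$ and $F$.
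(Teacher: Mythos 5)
Your overall architecture matches the paper's: decompose $U$ as a free module over $T=\mathbb{C}[\widetilde{c},\omega^{\pm1},\omega'^{\pm1}]$ with basis $1,f,e,f^2,e^2,\dots$, cut the center down by commutation with the group-like generators, and then classify the surviving monomials $\omega^a\omega'^b\widetilde{c}^{\,j}$ by the condition $q^{a-b}(-1)^j=1$, splitting into the odd and even $\l$ cases exactly as the paper does. Your verification of the inclusion $\supseteq$, your computation of the $\{e,f\}$-centralizer of $T$, your even-$\l$ decoupling, and your (correct) reading of the misprinted basis elements as $\omega^{\l/2}\widetilde{c}$, $\omega'^{\l/2}\widetilde{c}$ are all sound and consistent with the paper.

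The genuine gap is the step you yourself flag as the main obstacle: eliminating the blocks $Tf^{m\l}$ and $Te^{m\l}$ with $m$ odd when $\l$ is odd (so that $L=2\l$ and these are not powers of $F$ or $E$). You propose a leading-term analysis in the PBW order and correctly observe that it is delicate because $\widetilde{c}$ contains $fe$ and so scrambles the $f$-degree filtration --- but you do not carry it out, so the reverse inclusion in case (1) is not actually proved. The paper sidesteps this entirely with one observation you are missing: a central element must also commute with $\widetilde{c}$, which is itself an element of $U$. Since $\widetilde{c}\in T$ and $\widetilde{c}f^m=(-1)^m f^m\widetilde{c}$ (Proposition 2.1), the $T$-component $t_m f^m$ of a central element satisfies $t_m\bigl(1-(-1)^m\bigr)\widetilde{c}f^m=0$, and the no-zero-divisor property (Theorem 3.2) forces $t_m=0$ for all odd $m$. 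Combined with the constraint $\l\mid m$ coming from $\omega,\omega'$, this gives $L\mid m$ in both parities of $\l$, i.e.\ the commutant of $\{\omega,\omega',\widetilde{c}\}$ is exactly $T[E]+T[F]$, and the rest of your argument then goes through. Replacing your sketched leading-term computation by this one line would close the gap.
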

\begin{proof}
The commutative subalgebras $T[E]$ and $T[F]$ are free polynomial algebras. We can use $F$ and $E$ to refine the previously obtained decomposition of $U$ as an $T$-module: $U = {U_ + } + {U_ - }$ where ${U_ + }$ is the free $T[F]$-module with basis $1,f, \cdots ,{f^{L - 1}}$ and ${U_ - }$ the free $T[E]$-module with basis $1,e, \cdots ,{e^{L - 1}}$ (the sum ${U_ + } + {U_ - }$ is not direct, because ${U_ + } \cap {U_ - } = T$).
The monomial $e^{m}$ commutes with $\omega$ and ${\omega ^{'}}$ only if $m$ is a multiple of $l$ and with $\widetilde{c}$ only if $m$ is even. The same is true for powers of $f$.
Hence the commutant of $\omega$, ${\omega ^{'}}$ and $\widetilde{c}$ in $U$ is $T[E] + T[F]$.

Now we use the same argument of the previous section. Any element of $T[E] + T[F]$ can be expanded in powers of $\omega$, ${\omega ^{'}}$ and $\widetilde{c}$. In the expanded formula of a central element, a monomial ${\omega ^i}{\omega ^{'j}}{\widetilde{c}}^{k}$ can appear with non-zero coefficient only if ${q^{i-j}}{( - 1)^k} = 1$. This happens for instance if $i-j$ is a multiple of $l$ and $k$ is even. If $l$ is odd, this is the only possibility, i.e., $(1)$. If $l$ is even, in addition to the set of solution that $i-j$ is a multiple of $l$ and $k$ is even, there is another set of solution, that is, $i-j$ is odd multiple of $l/2$, and $k$ is  odd. In this circumstance, the center is the free module on $\mathbb{C}[\omega {\omega ^{'}},{\widetilde{c}}^{2},{\omega ^{ \pm 1}},{\omega ^{{'} \pm 1}},F] + \mathbb{C}[\omega {\omega ^{'}},{\widetilde{c}}^{2},{\omega ^{ \pm 1}},{\omega ^{{'} \pm 1}},E]$ with basis $1$, ${\omega ^{{\textstyle{1 \over 2}}}}\widetilde{c}$, ${\omega ^{'{\textstyle{1 \over 2}}}}\widetilde{c}$.
\end{proof}
This theorem gives an explicit description of the center, but there are some drawbacks associated with the multiplication structure. For instance, though $E$ and $F$ are described abstractly as above, we still need an exact expression for $FE$. This is considered in Sect. 5.

\section{\textbf{Dickson polynomials}}

The Dickson polynomial was originally introduced over finite fields, and was known to be a deformation of the Chebychev polynomial (cf. {\cite{LMT}}).

\begin{defini}
The Dickson polynomial ${D_n}(u,a)$ of the first kind of degree $n$ in the indeterminate $u$ with parameter $a \in \mathbb{C}$ is given by
\begin{equation*}
{D_n}(u,a) = \sum\limits_{i = 0}^{\left[ {{\textstyle{m \over 2}}} \right]} {\frac{m}{{m - i}}}\dbinom{m-i}{i} {( - a)^i}{u^{m - 2i}},
\end{equation*}
where ${\left[ {{\textstyle{m \over 2}}} \right]}$ denotes the largest integer $ \le {\textstyle{m \over 2}}$. For $n=0$, set ${D_0}(u,a) = 2$, and for $n=1$, ${D_1}(u,a) = u$.
\end{defini}
We remark that ${D_m}(u,1) = 2{P_m}\left( {\frac{u}{2}} \right)$ with $u = 2\cos \theta$ is the Chebychev polynomial.
\begin{lem} \cite{Ca} (i) The Dickson polynomial is determined by the recursive relation
 \begin{equation*}
{D_{m + 2}}(u,a) = u{D_{m + 1}}(u,a) - a{D_m}(u,a),  \qquad n\geq0
\end{equation*}
with the initial condition
\begin{equation*}
{D_0}(u,a) = 2, \ {D_1}(u,a) = u.
\end{equation*}
(ii) ${D_{m}}(u,a)$ has the following generating function
\begin{equation*}
\sum\limits_{m \ge 0} {{D_m}(u,a)} {z^m} = \frac{{2 - uz}}{{1 - uz + a{z^2}}}.
\end{equation*}
\end{lem}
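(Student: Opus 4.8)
The plan is to establish (i) first and then obtain (ii) as a formal consequence of the recurrence together with the two initial values. For (i) the cleanest route is to linearize $D_m$ through the companion quadratic of the recurrence. I would introduce $\alpha,\beta$ as the roots of $t^2-ut+a=0$, so that $\alpha+\beta=u$ and $\alpha\beta=a$ (working either in a splitting field or treating $\alpha,\beta$ as formal symbols with these prescribed symmetric functions). The defining sum is then recognized as the Waring--Girard expansion of the power sum $p_m=\alpha^m+\beta^m$ in terms of the elementary symmetric functions $e_1=u$ and $e_2=a$:
\begin{equation*}
\alpha^m+\beta^m=\sum_{i=0}^{\lfloor m/2\rfloor}\frac{m}{m-i}\binom{m-i}{i}(-a)^i u^{m-2i}=D_m(u,a),
\end{equation*}
which matches the stated coefficients term by term, including the conventions $D_0=2$ and $D_1=u$.

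Once the identity $D_m(u,a)=\alpha^m+\beta^m$ is in hand, the recurrence is immediate. Each root satisfies $t^2=ut-a$, so that $\alpha^{m+2}=u\alpha^{m+1}-a\alpha^m$ and $\beta^{m+2}=u\beta^{m+1}-a\beta^m$; adding these gives $D_{m+2}(u,a)=uD_{m+1}(u,a)-aD_m(u,a)$, with the base cases matching by inspection. For (ii) I would then pin down the generating function $G(z)=\sum_{m\ge 0}D_m(u,a)z^m$ by the standard shift-and-sum technique: multiplying the recurrence by $z^{m+2}$, summing over $m\ge 0$, writing each piece as a tail of $G(z)$, and isolating the $m=0,1$ contributions via $D_0=2$, $D_1=u$, I obtain
\begin{equation*}
G(z)-2-uz=uz\bigl(G(z)-2\bigr)-az^2 G(z).
\end{equation*}
Collecting terms yields $(1-uz+az^2)G(z)=2-uz$, hence $G(z)=(2-uz)/(1-uz+az^2)$.

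The only step needing genuine care is the identification $D_m(u,a)=\alpha^m+\beta^m$, i.e.\ the Waring formula; I expect this to be the main obstacle, while everything downstream is purely formal. If one prefers a self-contained argument that avoids invoking it, the recurrence in (i) can instead be verified directly from the explicit sum: after substituting the expressions for $uD_{m+1}$ and $-aD_m$ and shifting indices so that all three sums carry the common factor $u^{m+2-2i}(-a)^i$, the claim reduces to the coefficient identity
\begin{equation*}
\frac{m+2}{m+2-i}\binom{m+2-i}{i}=\frac{m+1}{m+1-i}\binom{m+1-i}{i}+\frac{m}{m+1-i}\binom{m+1-i}{i-1},
\end{equation*}
which follows from Pascal's rule on the binomial coefficients combined with routine bookkeeping of the rational prefactors. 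Part (ii) then proceeds exactly as above, independently of the power-sum representation.
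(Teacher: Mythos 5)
Your proposal is correct, but note that the paper offers no proof of this lemma at all: it is quoted verbatim from the reference [Ca], so there is nothing internal to compare against. Your argument therefore supplies what the paper omits, and it does so by the standard route: identifying $D_m(u,a)$ with the power sum $\alpha^m+\beta^m$ of the roots of $t^2-ut+a=0$ via the Waring formula, reading off the recurrence from $\alpha^2=u\alpha-a$, and then deriving the generating function by the shift-and-sum manipulation, which correctly yields $(1-uz+az^2)G(z)=2-uz$. One small caution: the Waring formula is itself most commonly proved \emph{from} the two-term recurrence for power sums, so leaning on it to derive (i) risks circularity unless you take Waring as an independently established identity. Your fallback --- verifying the recurrence directly from the defining sum via the coefficient identity
\begin{equation*}
\frac{m+2}{m+2-i}\binom{m+2-i}{i}=\frac{m+1}{m+1-i}\binom{m+1-i}{i}+\frac{m}{m+1-i}\binom{m+1-i}{i-1}
\end{equation*}
--- removes this concern; the identity is correct (it follows from writing $\frac{n}{n-i}\binom{n-i}{i}=\binom{n-i}{i}+\binom{n-i-1}{i-1}$ and applying Pascal's rule twice), and the boundary indices $i=0$ and $i=\lfloor (m+2)/2\rfloor$ check out. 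Either version is a complete and self-contained proof of both parts.
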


 If $u$ is an indeterminate and $m$ a positive integer, we claim that ${u^m} + {\left( { - \frac{1}{u}} \right)^m}$ is a monic polynomial of order $m$ in
 the variable $u - \frac{1}{u} = {b^{ - 1}}v\widetilde{c}$. In fact, ${u^m} + {\left( { - \frac{1}{u}} \right)^m}$ is the Dickson polynomial ${D_m}({b^{ - 1}}v\widetilde{c}, - 1)$. This is seen by considering the generating function:
$$\sum\limits_{m \ge 0} {{z^m}\left( {{u^m} + {{\left( { - \frac{1}{u}} \right)}^m}} \right)}  = \frac{1}{{1 - uz}} + \frac{1}{{1 + \frac{1}{u}z}} = \frac{{2 - {b^{ - 1}}v\widetilde{c}z}}{{1 - {b^{ - 1}}v\widetilde{c}z - {z^2}}} = \sum\limits_{m \ge 0} {{z^m}} {D_m}({b^{ - 1}}v\widetilde{c}, - 1).$$
Thus, ${D_m}({b^{ - 1}}v\widetilde{c}, - 1) = {u^m} + {\left( { - \frac{1}{u}} \right)^m}$.

We can check that ${D_2}({b^{ - 1}}v\widetilde{c}, - 1) = {b^{ - 2}}{v^2}{\widetilde{c}}^{2} + 2 = C$, and ${D_{2m}}({b^{ - 1}}v\widetilde{c}, - 1)$ is a polynomial in $C$. In fact,
\begin{equation*}
\begin{split}
 \sum\limits_{m \ge 0} {{z^m}} {D_{2m}}({b^{ - 1}}v\widetilde{c}, - 1)
 &= \sum\limits_{m \ge 0} {{z^m}\left( {{u^{2m}} + {{\left( { - \frac{1}{u}} \right)}^{2m}}} \right)}  \\
 &= {\sum\limits_{m \ge 0} {(z{u^2})} ^m} + {\sum\limits_{m \ge 0} {\left( {z\frac{1}{{{u^2}}}} \right)} ^m}
\\
    &= \frac{1}{{1 - {u^2}z}} + \frac{1}{{1 - \frac{1}{{{u^2}}}z}} = \frac{{2 - \left( {{u^2} + \frac{1}{{{u^2}}}} \right)z}}{{1 - \left( {{u^2} + \frac{1}{{{u^2}}}} \right)z + {z^2}}}\\
&=\frac{{2 - \left( {{b^{ - 2}}{v^2}{\widetilde{c}}^{2} + 2} \right)z}}{{1 - \left( {{b^{ - 2}}{v^2}{\widetilde{c}}^{2} + 2} \right)z + {z^2}}} \\
&= \frac{{2 - Cz}}{{1 - Cz + {z^2}}} = \sum\limits_{m \ge 0} {{z^m}{T_m}} (C,1).
 \end{split}
 \end{equation*}

 In particular, ${T_m}(C,1) = {u^{2m}} + {\left( { - \frac{1}{u}} \right)^{2m}}.$

Comparing the generation functions, we obtain that
$${D_m}(i{b^{ - 1}}v\widetilde{c}, - 1) = i{T_m}({b^{ - 1}}v\widetilde{c},1),\quad {T_m}(i{b^{ - 1}}v\widetilde{c},1) = i{D_m}({b^{ - 1}}v\widetilde{c}, - 1).$$
Actually,
\begin{equation*}
\begin{split}
\sum\limits_{m \ge 0} {{z^m}} {D_{m}}(i{b^{ - 1}}v\widetilde{c}, - 1) &= \frac{{2 - i{b^{ - 1}}v\widetilde{c}z}}{{1 - i{b^{ - 1}}v\widetilde{c}z - {z^2}}}. \\
\sum\limits_{m \ge 0} {{z^m}{i^m}} {T_m}\left( {{b^{ - 1}}v\widetilde{c},1} \right)& = {\sum\limits_{m \ge 0} {(iz)} ^m}{T_m}\left( {{b^{ - 1}}v\widetilde{c},1} \right)\\
& = \frac{{2 - i{b^{ - 1}}v\widetilde{c}z}}{{1 - i{b^{ - 1}}v\widetilde{c}z + {{(iz)}^2}}}= \frac{{2 - i{b^{ - 1}}v\widetilde{c}z}}{{1 - i{b^{ - 1}}v\widetilde{c}z - {z^2}}}.
\end{split}
\end{equation*}
Thus, ${D_m}(i{b^{ - 1}}v\widetilde{c}, - 1) = i{T_m}({b^{ - 1}}v\widetilde{c},1)$. The other equation can be verified in a similar manner.

We remark that $\frac{{{D_{2m + 1}}({b^{ - 1}}v\widetilde{c}, - 1)}}{{{b^{ - 1}}v\widetilde{c}}}$ is a polynomial in $C$. In fact,
\begin{equation*}
\begin{split}
\sum\limits_{m \ge 0} {{z^m}\frac{{{D_{2m + 1}}({b^{ - 1}}v\widetilde{c}, - 1)}}{{{b^{ - 1}}v\widetilde{c}}}}  &= \sum\limits_{m \ge 0} {{z^m}} \left( {{u^{2m}} + {{\left( { - {u^{ - 1}}} \right)}^{2m}}} \right) = \frac{{(u - {u^{ - 1}}) + (u - {u^{ - 1}})z}}{{(u - {u^{ - 1}})\left( {1 - ({u^2} + {u^{ - 2}})z + {z^2}} \right)}}
\\
 & = \frac{{1 + z}}{{1 - Cz + {z^2}}} = \sum\limits_{m \ge 0} {{z^m}} {R_m}(C,1).
 \end{split} \end{equation*}

\section{\textbf{Scasimir, generators and relations }}
In Proposition $2.2$, when $m = {l'}$, we have
\begin{equation}\label{e:prodrel}
 \mathop \Pi \limits_{n = 0}^{{l'} - 1} \left( {\widetilde{c} - {q^{'n}}{r^{{\textstyle{1 \over 2}}}}\omega  + {q^{' - n}}{s^{{\textstyle{1 \over 2}}}}{\omega ^{{'}}}} \right) = \varepsilon ({l'})( - \eta ){}^{{l'}}{f^{{l'}}}{e^{{l'}}}.
 \end{equation}
We know ${s^{ - {\textstyle{1 \over 4}}}}{\omega ^{'}}^{ - {\textstyle{1 \over 2}}} = v$, ${r^{-{\textstyle{1 \over 4}}}}{\omega ^{-{\textstyle{1 \over 2}}}}{s^{ - {\textstyle{1 \over 4}}}}{\omega ^{'}}^{ - {\textstyle{1 \over 2}}}\widetilde{c} = {b^{ - 1}}v\widetilde{c}=u - {u^{ - 1}}$. Then
\begin{equation*}
 \begin{split}
 \mathop \Pi \limits_{n = 0}^{{l'} - 1} &\left( {\widetilde{c} - {q^{'n}}{r^{{\textstyle{1 \over 2}}}}\omega  + {q^{' - n}}{s^{{\textstyle{1 \over 2}}}}{\omega ^{'}}} \right)\\
&=\mathop \Pi \limits_{n = 0}^{{l'} - 1} \left[ {\widetilde{c} - {r^{{\textstyle{1 \over 4}}}}{s^{{\textstyle{1 \over 4}}}}{\omega ^{{\textstyle{1 \over 2}}}}{\omega ^{'}}^{{\textstyle{1 \over 2}}}\left( {{q^{'n}}{r^{{\textstyle{1 \over 4}}}}{s^{ - {\textstyle{1 \over 4}}}}{\omega ^{{\textstyle{1 \over 2}}}}{\omega ^{'}}^{ - {\textstyle{1 \over 2}}} - {q^{' - n}}{r^{ - {\textstyle{1 \over 4}}}}{s^{{\textstyle{1 \over 4}}}}{\omega ^{ - {\textstyle{1 \over 2}}}}{\omega ^{'}}^{{\textstyle{1 \over 2}}}} \right)} \right]  \\
   &= \mathop \Pi \limits_{n = 0}^{{l'} - 1} {r^{{\textstyle{1 \over 4}}}}{s^{{\textstyle{1 \over 4}}}}{\omega ^{{\textstyle{1 \over 2}}}}{\omega ^{'}}^{{\textstyle{1 \over 2}}}\mathop \Pi \limits_{n = 0}^{{l'} - 1} \left( {{r^{ - {\textstyle{1 \over 4}}}}{\omega ^{ - {\textstyle{1 \over 2}}}}{s^{ - {\textstyle{1 \over 4}}}}{\omega ^{'}}^{ - {\textstyle{1 \over 2}}}\widetilde{c} - {q^{'n}}{r^{{\textstyle{1 \over 4}}}}{\omega ^{{\textstyle{1 \over 2}}}}{s^{ - {\textstyle{1 \over 4}}}}{\omega ^{'}}^{ - {\textstyle{1 \over 2}}} + {q^{' - n}}{r^{ - {\textstyle{1 \over 4}}}}{\omega ^{ - {\textstyle{1 \over 2}}}}{s^{{\textstyle{1 \over 4}}}}{\omega ^{'}}^{{\textstyle{1 \over 2}}}} \right)\\
   &= \mathop \Pi \limits_{n = 0}^{{l'} - 1} b{v^{ - 1}}\mathop \Pi \limits_{n = 0}^{{l'} - 1} \left( {u - {u^{ - 1}} - b{q^{'n}}v + {b^{ - 1}}{q^{' - n}}{v^{ - 1}}} \right)\\
   &= {b^{l'}}{v^{ - l'}}\mathop \Pi \limits_{n = 0}^{{l'} - 1} {u^{ - 1}}(u - b{q^{'n}}v)(u + {b^{ - 1}}{q^{' - n}}{v^{ - 1}})\\
   &= {b^{l'}}{v^{ - l'}}{u^{ - l'}}\mathop \Pi \limits_{n = 0}^{{l'} - 1} (u - b{q^{'n}}v)(u + {b^{ - 1}}{q^{' - n}}{v^{ - 1}})\\
   &= {b^{l'}}{v^{ - l'}}{u^{ - l'}}\left( {{u^{l'}} - {{(bv)}^{l'}}} \right)\left( {{u^{l'}} - {{( - {b^{ - 1}}{v^{ - 1}})}^{l'}}} \right)\\
   &= {b^{l'}}{v^{ - l'}}\left( {{u^{l'}} + {{( - {u^{ - 1}})}^{l'}} - {{(bv)}^{l'}} - {{( - {b^{ - 1}}{v^{ - 1}})}^{l'}}} \right).
\end{split}
\end{equation*}

On the right hand side, we use the Dickson polynomial from Sect. 4 to rewrite $(5.1)$ as follows.
\begin{equation}
\begin{split}
 {D_{{l^{'}}}}({b^{ - 1}}v\widetilde{c}, - 1) &= {D_{{l^{'}}}}\left( {{{(rs)}^{ - {\textstyle{1 \over 4}}}}{{(\omega \omega ')}^{ - {\textstyle{1 \over 2}}}}\widetilde{c}, - 1} \right) \\
 \nonumber \quad \quad \quad \quad &={q^{{\textstyle{{{l'}} \over 2}}}}{(\omega {\omega ^{' - 1}})^{{\textstyle{{l'} \over 2}}}} + {( - 1)^{l'}}{q^{ - {\textstyle{{{l'}} \over 2}}}}{(\omega {\omega ^{' - 1}})^{ - {\textstyle{{l'} \over 2}}}} + \varepsilon ({l'}){s^{ - {\textstyle{{l'} \over 2}}}}{q^{ - {\textstyle{{l'} \over 2}}}}{( - \eta )^{l'}}{(\omega {\omega ^{'}})^{ - {\textstyle{{l'} \over 2}}}}{f^{l'}}{e^{l'}}.
\end{split}
\end{equation}
This provides a relation between $\widetilde{c}$ and the ${l^{'th}}/2$ and ${l^{'th}}$ powers of the other generators.

Based on these, we can derive function relations for the center. We divide it into cases.
\begin{theo}\label{mainly}
$(1)$ If $l$ is  not twice of an odd integer,  $l'=L$ is  even, and \eqref{e:prodrel} is a relation in the center:
\begin{equation}\label{e:rel}
   {( - 1)^{{\textstyle{L \over 2}}}}{T_{{\textstyle{L \over 2}}}}(C,1) =  - {(\omega {\omega ^{' - 1}})^{{\textstyle{L \over 2}}}} - {(\omega {\omega ^{' - 1}})^{ - {\textstyle{L \over 2}}}} - {( - 1)^{{\textstyle{L \over 2}}}}{s^{ - {\textstyle{L \over 2}}}}{\eta ^L}{(\omega {\omega ^{' - 1}})^{ - {\textstyle{L \over 2}}}}{f^L}{e^L}.
    \end{equation}
 $(2)$ If $l$ is twice of an odd integer, then $l=L=2l'$, and $l'$ is odd. Then \eqref{e:prodrel} turns into:
\begin{eqnarray}
     {(rs)^{ - {\textstyle{1 \over 4}}}}{(\omega {\omega ^{'}})^{ - {\textstyle{1 \over 2}}}}\widetilde{c}{R_{{\textstyle{{L - 2} \over 4}}}}(C,1) &=& {q^{{\textstyle{L \over 4}}}}{(\omega {\omega ^{' - 1}})^{{\textstyle{L \over 4}}}} - {q^{ - {\textstyle{L \over 4}}}}{(\omega {\omega ^{' - 1}})^{ - {\textstyle{L \over 4}}}} \\
     \nonumber &&+\; {( - 1)^{{\textstyle{{L + 2} \over 4}}}}{q^{ - {\textstyle{L \over 4}}}}{s^{ - {\textstyle{L \over 4}}}}{\eta ^{{\textstyle{L \over 2}}}}{(\omega {\omega ^{'}})^{ - {\textstyle{L \over 4}}}}{f^{{\textstyle{L \over 2}}}}{e^{{\textstyle{L \over 2}}}}.
    \end{eqnarray}

   \end{theo}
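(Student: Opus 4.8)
The plan is to read off both parts directly from the identity for $D_{l'}(b^{-1}v\widetilde{c},-1)$ derived just above (call it $(\ast)$), by specializing it according to the parity of $l'$ and then simplifying the root-of-unity scalars $q^{\pm l'/2}$, $(-1)^{l'}$ and $\varepsilon(l')$. The two inputs from Section $4$ are $D_{2m}(b^{-1}v\widetilde{c},-1)=T_m(C,1)$ and $D_{2m+1}(b^{-1}v\widetilde{c},-1)=(b^{-1}v\widetilde{c})\,R_m(C,1)$, together with $b^{-1}v\widetilde{c}=(rs)^{-1/4}(\omega\omega')^{-1/2}\widetilde{c}$. The whole proof is thus a parity analysis of $l'$ followed by scalar bookkeeping.

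For $(1)$, the hypothesis that $l$ is not twice an odd integer forces $l'=L$ with $L$ even: either $l$ is odd, so $L=2l$ and $L/2=l$, or $4\mid l$, so $L=l$ and $L/2=l/2$. Writing $l'=2\cdot(L/2)$ and invoking $D_{2m}=T_m(C,1)$ turns the left-hand side of $(\ast)$ into $T_{L/2}(C,1)$. I then multiply $(\ast)$ by $(-1)^{L/2}$ and evaluate the scalars in each subcase, using that $q$ is a primitive $l$th root of unity: when $4\mid l$ one has $q^{l/2}=-1$, while for $l$ odd $q^{l}=1$. A short check shows that in both subcases $(-1)^{L/2}q^{\pm L/2}=-1$ and $\varepsilon(L)\,q^{-L/2}=-1$ (recall $\varepsilon(2l)=-1$ for $l$ odd and $\varepsilon(l)=1$ for $4\mid l$). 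Substituting collapses the first two terms of $(\ast)$ to $-(\omega\omega'^{-1})^{L/2}-(\omega\omega'^{-1})^{-L/2}$ and the last term to the stated multiple of $f^{L}e^{L}$, which is precisely \eqref{e:rel}. To see that this is a relation in $Z(U)$, I invoke Section $3$: $C$ (equivalently $\widetilde{c}^{2}$), $\omega\omega'$, and $F=f^{L}$, $E=e^{L}$ are central, and $(\omega\omega'^{-1})^{\pm L/2}=\omega^{\pm L/2}\omega'^{\mp L/2}$ is central because conjugating $e$ (resp.\ $f$) by it scales by $q^{\pm L}=1$; hence every term of \eqref{e:rel} lies in the center.

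For $(2)$, $l$ being twice an odd integer gives $l=L=2l'$ with $l'=L/2$ odd. Now $l'=2m+1$ for $m=(l'-1)/2=(L-2)/4$, so $D_{2m+1}=(b^{-1}v\widetilde{c})R_m(C,1)$ rewrites the left-hand side of $(\ast)$ as $(rs)^{-1/4}(\omega\omega')^{-1/2}\widetilde{c}\,R_{(L-2)/4}(C,1)$, exactly the left-hand side claimed in $(2)$. On the right-hand side I use $(-1)^{l'}=-1$, which turns the first two terms into $q^{L/4}(\omega\omega'^{-1})^{L/4}-q^{-L/4}(\omega\omega'^{-1})^{-L/4}$; for the last term I rewrite $\varepsilon(l')(-\eta)^{l'}=(-1)^{(l'+1)/2}\eta^{l'}=(-1)^{(L+2)/4}\eta^{L/2}$ (checking $l'\equiv 1,3\pmod 4$ separately) and read off $s^{-l'/2}=s^{-L/4}$, $q^{-l'/2}=q^{-L/4}$, $(\omega\omega')^{-l'/2}=(\omega\omega')^{-L/4}$ and $f^{l'}e^{l'}=f^{L/2}e^{L/2}$. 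Collecting these yields the displayed relation of part $(2)$.

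The main obstacle is the scalar bookkeeping in part $(1)$: the individual factors $q^{\pm L/2}$ and $\varepsilon(L)$ take different values in the two subcases ($l$ odd versus $4\mid l$), and the whole content of the argument is that the combinations $(-1)^{L/2}q^{\pm L/2}$ and $\varepsilon(L)q^{-L/2}$ are uniformly equal to $-1$, so that a single clean relation \eqref{e:rel} holds across both cases. The analogous sign identity $\varepsilon(l')(-\eta)^{l'}=(-1)^{(L+2)/4}\eta^{L/2}$ in part $(2)$ is the same type of verification but is more straightforward.
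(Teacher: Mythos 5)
Your proposal is correct and follows essentially the same route as the paper, which proves the theorem by deriving the identity for $D_{l'}(b^{-1}v\widetilde{c},-1)$ immediately before the statement and then reading off the two cases according to the parity of $l'$ via $D_{2m}=T_m(C,1)$ and $D_{2m+1}=(b^{-1}v\widetilde{c})R_m(C,1)$; your scalar checks (that $(-1)^{L/2}q^{\pm L/2}=\varepsilon(L)q^{-L/2}=-1$ in both subcases of part $(1)$, and $\varepsilon(l')(-\eta)^{l'}=(-1)^{(L+2)/4}\eta^{L/2}$ in part $(2)$) are exactly the bookkeeping the paper leaves implicit, and you additionally supply the centrality verification the paper dismisses as easy. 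The only cosmetic point is that your computation yields $(\omega\omega')^{-L/2}$ in the last term of \eqref{e:rel}, consistent with the displayed identity preceding the theorem, whereas the theorem as printed has $(\omega\omega'^{-1})^{-L/2}$ --- an apparent typo in the statement rather than a gap in your argument.
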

 It is easy to verify each term remains in the scenter. Consequently, $(5.3)$  is a relation in the scenter. Multiplying by ${\omega ^{{\textstyle{L \over 2}}}}$ and ${\omega ^{' - {\textstyle{L \over 2}}}}$  on both sides respectively, we can obtain two relations $(5.4)$ \& $(5.5)$ below in the center. Another relation in the center $(5.6)$ originates from an expression of ${T_{{\textstyle{L \over 2}}}}(C,1) = {D^2}_{{\textstyle{L \over 2}}}({b^{ - 1}}v\widetilde{c}, - 1)+ 2$ in connection with central elements. Combining $(5.4)$ and $(5.5)$ with $(5.6)$ yields a formula for ${f^L}{e^L}$ in the center $(5.7)$.
\begin{theo}\label{mainly}
 If $l$ is twice an odd integer,  the following relations in the center hold:
\begin{equation}
\begin{split}
 {(rs)^{ - {\textstyle{1 \over 4}}}}{\omega ^{{\textstyle{{L - 1} \over 2}}}}{\omega ^{' - {\textstyle{1 \over 2}}}}\widetilde{c}{R_{{\textstyle{{L - 2} \over 4}}}}(C,1) &= {q^{{\textstyle{L \over 4}}}}{\omega ^{{\textstyle{{3L} \over 4}}}}{\omega ^{'-{\textstyle{L \over 4}}}} - {q^{ - {\textstyle{L \over 4}}}}{\omega ^{{\textstyle{L \over 4}}}}{\omega ^{'{\textstyle{L \over 4}}}} \\
& \quad+\;{( - 1)^{{\textstyle{{L + 2} \over 4}}}}{q^{ - {\textstyle{L \over 4}}}}{s^{ - {\textstyle{L \over 4}}}}{\eta ^{{\textstyle{L \over 2}}}}{f^{{\textstyle{L \over 2}}}}{e^{{\textstyle{L \over 2}}}}{\omega ^{{\textstyle{L \over 4}}}}{\omega ^{' - {\textstyle{L \over 4}}}}.
\end{split}
\end{equation}
\begin{equation}
\begin{split}
 {(rs)^{ - {\textstyle{1 \over 4}}}}{\omega ^{ - {\textstyle{1 \over 2}}}}{\omega ^{' - {\textstyle{{L + 1} \over 2}}}}\widetilde{c}{R_{{\textstyle{L-2 \over 2}}}}(C,1) &=  {q^{{\textstyle{L \over 4}}}}{\omega ^{{\textstyle{L \over 4}}}}{\omega ^{' - {\textstyle{{3L} \over 4}}}} - {q^{ - {\textstyle{L \over 4}}}}{\omega ^{ - {\textstyle{L \over 4}}}}{\omega ^{' - {\textstyle{L \over 4}}}}\\
  &\quad+{( - 1)^{{\textstyle{{L + 2} \over 4}}}}{q^{ - {\textstyle{L \over 4}}}}{s^{ - {\textstyle{L \over 4}}}}{\eta ^{{\textstyle{L \over 2}}}}{f^{{\textstyle{L \over 2}}}}{e^{{\textstyle{L \over 2}}}}{\omega ^{ - {\textstyle{L \over 4}}}}{\omega ^{' - {\textstyle{{3L} \over 4}}}}.
  \end{split}
\end{equation}
\begin{equation}
\begin{split}
 {T_{{\textstyle{L \over 2}}}}(C,1) &=  - {(\omega {\omega ^{' - 1}})^{{\textstyle{L \over 2}}}} - {(\omega {\omega ^{' - 1}})^{ - {\textstyle{L \over 2}}}} + {s^{ - {\textstyle{L \over 2}}}}{\eta ^L}{f^L}{e^L}{(\omega {\omega ^{'}})^{ - {\textstyle{L \over 2}}}} \\
& \quad+ 2{( - 1)^{{\textstyle{{L + 2} \over 4}}}}{s^{ - {\textstyle{L \over 4}}}}{\eta ^{{\textstyle{L \over 2}}}}{f^{{\textstyle{L \over 2}}}}{e^{{\textstyle{L \over 2}}}}({\omega ^{ - {\textstyle{L \over 2}}}} + {\omega ^{' - {\textstyle{L \over 2}}}}).
\end{split}
\end{equation}
\begin{equation}
\begin{split}
  {T_{{\textstyle{L \over 2}}}}(C,1) &= {(\omega {\omega ^{' - 1}})^{{\textstyle{L \over 2}}}} + {(\omega {\omega ^{' - 1}})^{ - {\textstyle{L \over 2}}}} + {s^{ - {\textstyle{L \over 2}}}}{\eta ^L}{f^L}{e^L}{(\omega {\omega ^{'}})^{ - {\textstyle{L \over 2}}}}\\
&\quad+2{(rs)^{ - {\textstyle{1 \over 4}}}}{q^{{\textstyle{L \over 4}}}}\left( {{{(\omega {\omega ^{' - 1}})}^{{\textstyle{L \over 4}}}} + {{(\omega {\omega ^{' - 1}})}^{ - {\textstyle{L \over 4}}}}} \right){(\omega {\omega ^{'}})^{ - {\textstyle{1 \over 2}}}}\widetilde{c}{R_{{\textstyle{{L - 2} \over 4}}}}(C,1)+4.
\end{split}
\end{equation}
   \end{theo}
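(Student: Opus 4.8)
The plan is to derive all four identities from the single supercenter relation established just above for the case $l=2l'$, which — since $l'=L/2$ is odd, so that $D_{l'}(b^{-1}v\widetilde{c},-1)=(b^{-1}v\widetilde{c})R_{(L-2)/4}(C,1)$ — reads
\[
\begin{aligned}
(rs)^{-1/4}(\omega\omega')^{-1/2}\widetilde{c}\,R_{(L-2)/4}(C,1)
&=q^{L/4}(\omega\omega^{'-1})^{L/4}-q^{-L/4}(\omega\omega^{'-1})^{-L/4}\\
&\quad+(-1)^{(L+2)/4}q^{-L/4}s^{-L/4}\eta^{L/2}(\omega\omega')^{-L/4}f^{L/2}e^{L/2}.
\end{aligned}
\]
Write the three summands on the right as $A$, $B$, $\Phi$. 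First I would record the arithmetic special to this case: since $q$ is a primitive $l$-th root of unity and $l=L$, we have $q^{L/2}=-1$, hence $-q^{-L/4}=q^{L/4}$, and both $\omega^{L/2}$ and $\omega^{'L/2}$ \emph{anticommute} with $e$ and $f$. Because $\widetilde{c}$ also anticommutes with the fermionic generators and commutes with $\omega,\omega'$, every term above commutes with bosonic elements and anticommutes with fermionic ones; multiplying the displayed relation on the left by $\omega^{L/2}$ (respectively $\omega^{'-L/2}$) therefore yields a genuine identity among central elements. Reading off the two products and collapsing the Cartan monomials via $-q^{-L/4}=q^{L/4}$ gives the first two relations.

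For the third relation I would start from the identity $T_{L/2}(C,1)=D_{L/2}(b^{-1}v\widetilde{c},-1)^2+2$, which follows from the $u$-substitution of Section 4 together with $u^{L/2}(-1/u)^{L/2}=(-1)^{L/2}=-1$. Substituting $D_{l'}=A+B+\Phi$ and expanding the square, the Cartan part $(A+B)^2$ together with the additive $2$ yields $-(\omega\omega^{'-1})^{L/2}-(\omega\omega^{'-1})^{-L/2}$ (here the mixed term $2AB=-2$ cancels the $+2$); the cross terms $(A+B)\Phi+\Phi(A+B)=2(A+B)\Phi$ assemble into the symmetric term $f^{L/2}e^{L/2}(\omega^{-L/2}+\omega^{'-L/2})$; and $\Phi^2$ collapses to the single term $f^Le^L(\omega\omega')^{-L/2}$. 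The collapse of $\Phi^2$ is the decisive step: evaluating the first identity of Lemma \ref{l4.2} at $m=l'$ annihilates its right-hand side, because $q^{'\pm l'}=1$, giving $ef^{l'}=-f^{l'}e$ and hence $e^{l'}f^{l'}=-f^{l'}e^{l'}$, so that $f^{l'}e^{l'}f^{l'}e^{l'}=-f^Le^L$.

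The fourth relation I would obtain by feeding the displayed relation back into the third. Writing the cross-term contribution as $2(A+B)\Phi=2(A+B)(b^{-1}v\widetilde{c})R_{(L-2)/4}(C,1)-2(A+B)^2$ and using $A+B=q^{L/4}\bigl((\omega\omega^{'-1})^{L/4}+(\omega\omega^{'-1})^{-L/4}\bigr)$ replaces the $f^{L/2}e^{L/2}$ term by the $\widetilde{c}\,R_{(L-2)/4}(C,1)$ term, reverses the signs of the two Cartan monomials, and produces the additive constant $4$; this is exactly the fourth relation. Solving it for $f^Le^L$ then expresses $FE$ in terms of $C$, $\omega\omega'$ and $\omega^{\pm1},\omega^{'\pm1}$, the explicit formula for $FE$ promised after the center theorem.

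The one genuinely delicate step, and where I expect the bulk of the work, is the scalar bookkeeping in the second paragraph. I must first check that $A$ and $B$ commute with $f^{l'}e^{l'}$ — the $q$-powers picked up when $(\omega\omega^{'-1})^{l'/2}$ is moved across $f^{l'}$ and across $e^{l'}$ cancel, which is what turns $(A+B)\Phi+\Phi(A+B)$ into $2(A+B)\Phi$ — and then match every constant (the sign $\varepsilon(l')$, the powers of $q$, $s$, $\eta$, and the factors of $2$) so that the cross terms become precisely $2(-1)^{(L+2)/4}s^{-L/4}\eta^{L/2}f^{L/2}e^{L/2}(\omega^{-L/2}+\omega^{'-L/2})$ and $\Phi^2$ becomes $s^{-L/2}\eta^L f^Le^L(\omega\omega')^{-L/2}$. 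Once $ef^{l'}=-f^{l'}e$ is in hand the shape of all four relations is forced, so the difficulty is this constant-matching rather than anything structural.
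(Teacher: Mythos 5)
Your proposal is correct and follows essentially the same route as the paper: relations $(5.4)$--$(5.5)$ by left-multiplying $(5.3)$ by $\omega^{L/2}$ and $\omega'^{-L/2}$, relation $(5.6)$ from $T_{L/2}(C,1)=D_{L/2}(b^{-1}v\widetilde{c},-1)^2+2$ expanded via $(A+B+\Phi)^2$, and $(5.7)$ by substituting $(5.3)$ back into the cross terms. In fact your write-up supplies the key verifications (notably $q^{L/2}=-1$, $ef^{l'}=-f^{l'}e$ from Lemma 2.1 at $m=l'$, hence $f^{l'}e^{l'}f^{l'}e^{l'}=-f^Le^L$) that the paper's one-paragraph argument leaves implicit.
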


\bigskip

\bigskip
\centerline{\bf Acknowledgments}
\medskip
The work is supported in part by the National Natural Science Foundation of China (Grant Nos. 12171155, 12071094, 12171303)
and the Simons Foundation Grant No. 523868. The second author is also supported in part by the Science and Technology Commission of Shanghai Municipality (Grant No. 18dz2271000).

\bigskip
\bibliographystyle{amsalpha}

\end{document}